\newtheorem{theorem}{Theorem}% [section]
\newtheorem{definition}[theorem]{Definition}
\newtheorem{proposition}[theorem]{Proposition}
\newtheorem{remark}[theorem]{Remark}
\newtheorem{example}[theorem]{Example}
\newtheorem{lemma}[theorem]{Lemma}
\def\supp{{\mathop{\rm supp\, }}}
\def\Id{\rm Id}
\def\Sch{\mathcal{S}}
\def\R{\mathbb{R}}
\newcommand\widecheck[1]{%
	\savestack{\tmpbox}{\stretchto{%
			\scaleto{%
				\scalerel*[\widthof{\ensuremath{#1}}]{\kern-.6pt\bigwedge\kern-.6pt}%
				{\rule[-\textheight/2]{1ex}{\textheight}}%WIDTH-LIMITED BIG WEDGE
			}{\textheight}% 
		}{0.5ex}}%
	\stackon[1pt]{#1}{\scalebox{-1}{\tmpbox}}%
}
\title{Detecting quasicrystals with quadratic time-frequency distributions}
\author{P. Boggiatto, C. Fern\'andez, A. Galbis, A. Oliaro\footnote{The research of C. Fernández and A. Galbis was partially supported by the projects MCIN PID2020-119457GBI00/AEI/10.13039/501100011033 and GV Prometeu/2021/070. P. Boggiatto and A. Oliaro are members of INdAM, gruppo GNAMPA - Gruppo Nazionale per l'Analisi Matematica, la Probabilità e le loro Applicazioni, sezione Analisi Funzionale e Armonica.}}
\date{}
\begin{document}
	
\maketitle

\begin{abstract}
	The usefulness of time-frequency analysis methods in the study of quasicrystals was pointed out in \cite{bfgo}, where we proved that a tempered distribution $\mu$ on ${\mathbb R}^d$ whose Wigner transform is a measure supported on the cartesian product of two uniformly discrete sets in ${\mathbb R}^d$ is a Fourier quasicrystal. In this paper we go further in this direction using the matrix-Wigner transforms to detect quasicrystal structures. The results presented here cover essentially all the most important quadratic time-frequency distributions, and are obtained considering
	%we go further in the analysis of which information can be deduced on a distribution $\mu$ and its Fourier transform, from the structure of the matrix-Wigner transform of $\mu$. 
	two different (disjoint) classes of matrix-Wigner transforms, discussed respectively in Theorems \ref{thm-main1} and \ref{thm-main2}. The transforms considered in Theorem \ref{thm-main1} include the classical Wigner transform, as well as all the time-frequency representations of matrix-Wigner type belonging to the Cohen class. On the other hand Theorem \ref{thm-main2}, which does not apply to the classical Wigner, has, as main example, the Ambiguity function. In this second case we prove a stronger result with respect to \cite{bfgo}, since we only suppose that the support of the matrix-Wigner transform of $\mu$ is contained in the cartesian product of two discrete sets, obtaining that both the support and the spectrum of $\mu$ are uniformly discrete.
\end{abstract}

\section{Introduction}

Quasicrystals are alloys whose diffraction spectrum consists of bright spots but which lacks the periodic structure of crystals. They were discovered experimentally in the mid-80s and their structure can be modeled using what we know today as {\it Fourier quasicrystals} or, more generally, {\it crystalline measures}. Following the approach of Lev and Olevskii \cite{olevskii2} we will call {\it Fourier quasicrystal} a tempered measure $\mu$ of the form $\mu = \sum_{\lambda\in \Lambda}a_\lambda \delta_\lambda$ for which $\widehat{\mu} = \sum_{s\in S}b_s \delta_s,$ where $\delta_\xi$ is the Dirac mass point at $\xi$, and $\Lambda$, $S$ are discrete subsets of ${\mathbb R}^n$, supposing further that $|\mu|$ and $|\widehat{\mu}|$ are also tempered distributions. If instead $\mu$ and $\widehat{\mu}$ are measures with locally finite support, $\mu$ is said to be a { \it cristalline measure}. Fourier quasicrystals and/or cristalline measures have been subject of a considerable literature, see for instance \cite{favorov, favorov2,favorov3,lagarias,olevskii2,olevskii1,Lev-Reti,olevskii3} and the references therein.
                                                                                                                                                                                                                                                                                            
The Wigner transform $W(\mu)$ describes the content of a tempered distribution $\mu$ simultaneously with respect to time and frequency, therefore it is reasonable to investigate if a quasicrystal structure of $\mu$ can be deduced from properties of $W(\mu)$ instead from separate conditions on $\mu$ and $\widehat \mu$, and this goal was achieved in \cite{bfgo}. On the other hand, there are many other time-frequency representations connected to the classical Wigner, and one reasonable question is whether the fact that $\mu$ is a quasicrystal can be deduced from features of time-frequency representations different from the Wigner, and if there are classes of representations that do this job better than others. The matrix-Wigner transform (cf. \cite{Bayer}) is defined as 
$$
W_T(\mu)(x,\omega)=\int_{\R^d} e^{-2\pi it\omega} \mu(A_0 x+B_0t)\overline{\mu(C_0x+D_0t)}\,dt,
$$
for $\mu\in\Sch(\R^d)$, with standard extension to the space of tempered distributions, where 
$$
T=\left(\begin{array}{cc} A_0 &B_0 \\ C_0 &D_0 \end{array}\right)
$$ 
is invertible and $A_0, B_0, C_0, D_0$ are $d\times d$ real matrices. 
The classical Wigner transform, denoted by $W(\mu)$, is obtained by choosing $A_0=C_0=\Id$, $B_0=\frac{1}{2}\Id$ and $D_0=-\frac{1}{2}\Id,$ where $\Id$ denotes the identity $d\times d$ matrix. Another widely used class of time-frequency representations is the so-called Cohen class (see \cite{Cohen66, Jans97}), defined as all sesquilinear forms of the type
$$
Q(\mu)=\sigma*W(\mu)
$$
for $\sigma\in\Sch'(\R^{2d})$. Every covariant and weakly continuous quadratic time-frequency distribution belongs to the Cohen class (\cite[Theorem 4.5.1]{Gro01}). These classes are not disjoint, and in \cite{Bayer} the matrix-Wigner transforms that are also in the Cohen class are characterized. On the other hand, there are matrix-Wigner transforms that are not in the Cohen class, as for instance the Ambiguity function, that in our study on quasicrystals plays an important role.

In \cite[Theorem 1]{bfgo} we proved that, if
$$
W(\mu)=\sum_{(r,s)\in R\times S} c_{r,s}\delta_{(r,s)}
$$
for a tempered distribution $\mu\in \mathcal{S}'(\mathbb{R}^d)$, and if $R$ and $S$ are uniformly discrete sets in ${\mathbb R}^d$, then $\mu$ and $\widehat{\mu}$ are measures with support contained in $R$ and $S$, respectively. The matrix-Wigner is also considered in \cite{bfgo}, but only in the case of dimension $d=1$; the case of dimension greater than $1$, for the general matrix-Wigner transform, presents much more difficulties.

In this paper we go further in the analysis of which information can be deduced on a distribution $\mu$ and its Fourier transform, from the structure of the matrix-Wigner transform of $\mu$. More precisely, we consider two different (disjoint) classes of matrix-Wigner transforms, proving the following results.

\begin{theorem}\label{thm-main1}
Let $T$ be an invertible $2d\times 2d$ matrix of the form
$$
T=\left(\begin{array}{cc} A_0 &B_0 \\ C_0 &D_0 \end{array}\right),\qquad\text{with\ inverse}\qquad T^{-1}=\left(\begin{array}{cc} A &B \\ C &D \end{array}\right).
$$
Let $\mu\in {\mathcal S}^\prime({\mathbb R}^d)$ satisfy
$$
W_T(\mu) = \sum_{(r, s)\in R\times S} c_{r,s} \delta_{(r,s)}
$$ with $R, S\subset {\mathbb R}^d$ u.d. sets (see Definition \ref{ud}) and suppose that the following conditions are satisfied:
\begin{itemize}
\item[{\rm (i)}] $\sup_{s\in S} |c_{rs}| < \infty$ for every $r\in R$ and $\sup_{r\in R} |c_{rs}| < \infty$ for every $s\in S;$
\item[{\rm (ii)}] $\det (B_0-D_0)\neq 0$;
\item[{\rm (iii)}] $\det(A+B)\neq 0$.
\end{itemize}
Then $\mu$ and $\widehat{\mu}$ are measures whose supports, $\Lambda$ and $\Sigma,$ are uniformly discrete. If furthermore $W_T$ is in the Cohen class, then $\Lambda\subseteq R$ and $\Sigma\subseteq S$.

\end{theorem}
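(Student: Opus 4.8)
The plan is to transfer the hypothesis to the tempered distribution $F:=(\mu\otimes\overline\mu)\circ M_T$ on $\mathbb R^{2d}$, where $M_T$ is the linear isomorphism $(x,t)\mapsto(A_0x+B_0t,\,C_0x+D_0t)$. With the standard normalisation one has $W_T(\mu)=\mathcal F_2 F$, the partial Fourier transform of $F$ in the second variable, and hence $\mathcal F_1 W_T(\mu)=\widehat F$, the full Fourier transform of $F$. Writing $\Lambda=\supp\mu$ and $\Sigma=\supp\widehat\mu$, one has $\supp F=M_T^{-1}(\Lambda\times\Lambda)$ and $\supp\widehat F=T^{\top}(\Sigma\times(-\Sigma))$.

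First I would prove that $\Lambda$ and $\Sigma$ are uniformly discrete (u.d.). Since $\supp W_T(\mu)\subseteq R\times S$, a short argument pairing $F$ with tensor products gives $\supp F\subseteq R\times\mathbb R^d$; hence $A\lambda+B\lambda'\in R$ for all $\lambda,\lambda'\in\Lambda$, and specialising $\lambda'=\lambda$ yields $(A+B)\Lambda\subseteq R$. By (iii) the matrix $A+B$ is invertible, so $\Lambda\subseteq(A+B)^{-1}R$ is u.d. Dually, $\supp\widehat F\subseteq\mathbb R^d\times S$ gives $B_0^{\top}\sigma-D_0^{\top}\sigma'\in S$ for all $\sigma,\sigma'\in\Sigma$, so $(B_0-D_0)^{\top}\Sigma\subseteq S$, and (ii) forces $\Sigma$ to be u.d.

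Next — and this is the heart of the matter — I would show that $\mu$ and $\widehat\mu$ are measures. Since $\Lambda$ is now u.d. and $\mu\in\mathcal S'$, write $\mu=\sum_{\lambda\in\Lambda}\mu_\lambda$ with $\mu_\lambda=\sum_{|\alpha|\le N}c_{\lambda,\alpha}\partial^\alpha\delta_\lambda$ of globally bounded order; equivalently $\widehat{\mu_\lambda}(\xi)=P_\lambda(\xi)\,e^{-2\pi i\lambda\cdot\xi}$ for a polynomial $P_\lambda$, and $\mu_\lambda$ is a point mass exactly when $P_\lambda$ is constant. Because $W_T(\mu)$ is a pure point measure, $F$ carries no $x$-derivatives: when tested against $\phi\otimes\psi$ with $\phi$ supported near a single $r\in R$, $\langle F,\phi\otimes\psi\rangle$ depends on $\phi$ only through $\phi(r)$. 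On the other hand $\supp F=M_T^{-1}(\Lambda\times\Lambda)$ is u.d. and $M_T^{-1}$ is injective, so near each of its points $F$ coincides with a single summand $(\mu_\lambda\otimes\overline{\mu_{\lambda'}})\circ M_T$, whose Fourier transform has, by a direct computation, polynomial factor $P_\lambda(A^{\top}\zeta_x+C^{\top}\zeta_t)\,\overline{P_{\lambda'}\!\big(-(B^{\top}\zeta_x+D^{\top}\zeta_t)\big)}$. The absence of $x$-derivatives forces this symbol to be independent of $\zeta_x$; since both factors are nonzero polynomials, a factorisation argument over $\mathbb C^{d}$ forces each of them to be independent of $\zeta_x$ separately. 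Taking $\lambda'=\lambda$, and using the invertibility of $T^{-1}$ — which gives $\operatorname{Im}A^{\top}+\operatorname{Im}C^{\top}=\operatorname{Im}B^{\top}+\operatorname{Im}D^{\top}=\mathbb R^{d}$, so that these invariances really hold on all of $\mathbb R^d$ — one deduces that $P_\lambda$ is invariant under translations by $\operatorname{Im}A^{\top}$ and by $\operatorname{Im}B^{\top}$; since (iii) gives $\operatorname{Im}A^{\top}+\operatorname{Im}B^{\top}=\mathbb R^{d}$, $P_\lambda$ is constant and $\mu=\sum_\lambda c_{\lambda,0}\delta_\lambda$ is a measure. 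The identical computation applied to $\widehat F$, using the invertibility of $T$ and this time (ii) (so that $\operatorname{Im}B_0+\operatorname{Im}D_0=\mathbb R^{d}$), shows that $\widehat\mu$ is a measure. Condition (i) enters to ensure that the partial Fourier series describing the fibres of $F$ and $\widehat F$ converge in $\mathcal S'$. I expect this to be the main obstacle: uniform discreteness of $\Lambda,\Sigma$ is comparatively soft, whereas ruling out higher-order derivatives requires the local symbol analysis at every point of $\supp F$, the polynomial factorisation, and careful bookkeeping of which of the invertibility of $T$, of (ii), and of (iii) is responsible for each surjectivity fact.

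Finally, for the Cohen-class assertion I would write $W_T(\mu)=\sigma*W(\mu)$ and use the characterisation of \cite{Bayer}: for such $T$ the symbol $\sigma$ is, up to normalisation, a chirp (i.e.\ $\widehat\sigma$ is unimodular), so convolution by $\sigma$ translates each hyperplane $\{x=(\lambda+\lambda')/2\}$ occurring in $W(\mu)=\sum_{\lambda,\lambda'}a_\lambda\overline{a_{\lambda'}}\,e^{-2\pi i(\lambda-\lambda')\omega}\,\delta_{(\lambda+\lambda')/2}(x)$ by an amount depending linearly on $\lambda-\lambda'$. Thus $W_T(\mu)=\sum_{\lambda,\lambda'}a_\lambda\overline{a_{\lambda'}}\,e^{-2\pi i(\lambda-\lambda')\omega}\,\delta_{g(\lambda,\lambda')}(x)$ with $g(\lambda,\lambda)=\lambda$; the diagonal term at $x=\lambda$ equals $|a_\lambda|^{2}$ times a constant in $\omega$ and cannot be cancelled by an off-diagonal term landing there (such a term has nonzero frequency $\lambda-\lambda'$ in $\omega$), so $\lambda\in R$, i.e.\ $\Lambda\subseteq R$. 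Repeating the computation with the analogous expansion of $W(\mu)$ in terms of $\widehat\mu=\sum_s b_s\delta_s$ yields $\Sigma\subseteq S$.
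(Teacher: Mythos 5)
Your proposal reaches the right conclusions but by a genuinely different route from the paper's, at least for the central step. For the uniform discreteness of $\Lambda$ and $\Sigma$ you argue exactly as the paper does (via $(A+B)(\Lambda)\subseteq A(\Lambda)+B(\Lambda)\subseteq R$ and its Fourier dual; the paper packages the dual half through the auxiliary matrix $L$ of \eqref{WT-Fourier-matr} rather than through $\supp\widehat F=T^{\rm t}(\Sigma\times(-\Sigma))$, but the content is the same). For the heart of the theorem — killing the derivatives of $\delta$ in $\mu$ — the paper instead runs a scaling argument: it tests $W(\mu)$ against $\phi_1\otimes\phi_2$ with $\widehat{\phi_2}=\psi(t\cdot)$, computes the left side of \eqref{rel-W-WT} as $t^{2N}$ times the quantity in \eqref{left-h-s}, bounds the right side by $Ct$ using hypothesis (i) and the uniform discreteness of $S$, and lets $t\to\infty$. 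Your local-symbol analysis (identify $F$ near each point of the u.d.\ set $T^{-1}(\Lambda\times\Lambda)$ with a single summand, observe that the purely atomic nature of $W_T(\mu)$ forbids first-variable derivatives, factor the resulting polynomial identity using additivity of $\deg_{\zeta_x}$, and deduce translation-invariance of $P_\lambda$ along ${\rm Im}\,A^{\rm t}$ and ${\rm Im}\,B^{\rm t}$) is more structural and, if completed, would even disentangle the roles of (ii) and (iii) more cleanly than the paper does (each hypothesis serving one of $\mu$, $\widehat\mu$ separately). The surjectivity facts you invoke (${\rm Im}\,A^{\rm t}+{\rm Im}\,C^{\rm t}=\mathbb R^d$ from invertibility of $T^{-1}$, ${\rm Im}\,A^{\rm t}+{\rm Im}\,B^{\rm t}=\mathbb R^d$ from (iii)) do check out.

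Two points need attention. First, the step you yourself flag is the real one: ``$\langle F,\phi\otimes\psi\rangle$ depends on $\phi$ only through $\phi(r)$'' is a statement aggregated over the whole fibre $\{r\}\times E_r$, and you must convert it into the pointwise statement that \emph{each} local piece of $F$ at $(r,t_0)$ carries no $x$-derivatives. This requires localizing also in the second variable (take $\psi$ supported near a single $t_0$ of the u.d.\ fibre, and $\phi$ with prescribed derivatives and $\phi(r)=0$), and the identity $\langle F,\phi\otimes\psi\rangle=\sum_{r,s}c_{r,s}\overline{\phi(r)}\,\overline{\widecheck\psi(s)}$ needs the absolute convergence of $\sum_{s\in S}|c_{r,s}|\,|\widecheck\psi(s)|$, which is exactly where (i) together with the uniform discreteness of $S$ enters (this is the analogue of the paper's estimate $\sum_{s\in S}|\widehat{g_{r,t}}(s)|\le Ct$). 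This is fixable, but it is the whole proof, not a footnote. Second, your Cohen-class argument is both incomplete and unnecessary: the cancellation claim (``the diagonal term cannot be cancelled by off-diagonal terms landing there'') involves an infinite sum of characters and would itself need an isolation-of-frequencies argument; but you do not need it, because for $T=T_{\rm Cohen}$ one reads off from \eqref{T-Cohen}--\eqref{T-Cohen-inv} that $A+B=\Id$ and $B_0-D_0=\Id$, so your own inclusions $(A+B)(\Lambda)\subseteq R$ and $(B_0-D_0)^{\rm t}(\Sigma)\subseteq S$ already give $\Lambda\subseteq R$ and $\Sigma\subseteq S$. This one-line observation is what the paper uses.
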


\begin{theorem}\label{thm-main2}
Let $T$ be an invertible $2d\times 2d$ matrix of the form
$$
T=\left(\begin{array}{cc} A_0 &B_0 \\ C_0 &B_0 \end{array}\right),
$$
where $A_0$, $B_0$ and $C_0$ are $d\times d$ matrices. Let $\mu\in {\mathcal S}^\prime({\mathbb R}^d)$ satisfy
$$
W_T(\mu) = \sum_{(r, s)\in R\times S} c_{r,s} \delta_{(r,s)}
$$
where $R, S$ are discrete subsets of ${\mathbb R}^d$. Then $\mu$ and $\widehat{\mu }$ are measures whose supports, $\Lambda$ and $\Sigma,$ are uniformly discrete; moreover, there exist invertible $d\times d$ matrices $M$ and $N$ such that $\Lambda-\Lambda \subset M(R),\ \Sigma-\Sigma\subset N(S)$, where $M(R)$ indicates the linear application $M$ computed on the set $R$, and similarly for $N(S)$. If moreover $R$ or $S$ is uniformly discrete then 
\begin{equation}\label{eq:main-ambiguity}
\mu = \sum_{j=1}^N P_j \sum_{\lambda\in L + \theta_j}\delta_\lambda,
\end{equation}
where $L$ is a lattice, $\theta_j\in {\mathbb R}^d$ and $P_j(x)$ is a trigonometric polynomial.
\end{theorem}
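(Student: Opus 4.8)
The plan is to strip off the parameters of $T$ by linear substitutions, reduce $W_T(\mu)$ to an ambiguity--type distribution, and then exploit the rigid support structure this forces. Since $D_0=B_0$ and $T$ is invertible, both $B_0$ and $E:=A_0-C_0$ are invertible (block row reduction of $T$). Substituting $u=B_0t$ and then $v=A_0x+u$ in the defining integral gives
$$W_T(\mu)(x,\omega)=|\det B_0|^{-1}e^{2\pi i\langle A_0x,B_0^{-T}\omega\rangle}\,\mathcal A\mu(Ex,B_0^{-T}\omega),\qquad \mathcal A\mu(y,\eta):=\int_{\R^d}e^{-2\pi i\langle v,\eta\rangle}\mu(v)\overline{\mu(v-y)}\,dv,$$
where $\mathcal A\mu$, the (cross--)ambiguity function, is defined on $\Sch'(\R^d)$ as the pullback of $\mu\otimes\bar\mu$ under an invertible linear map followed by a partial Fourier transform. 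As $(x,\omega)\mapsto(Ex,B_0^{-T}\omega)$ is invertible and the prefactor is smooth and unimodular, the hypothesis on $W_T(\mu)$ becomes: $\mathcal A\mu=\sum_{(r',s')\in R'\times S'}d_{r's'}\delta_{(r',s')}$, a pure point measure, with $R'=E(R)$ and $S'=(B_0^{T})^{-1}(S)$ discrete.

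Next I would take the partial inverse Fourier transform in $\eta$; using $\mathcal F^{-1}_{\eta}\delta_{(r',s')}=\delta_{r'}\otimes e^{2\pi i\langle\cdot,s'\rangle}$ this yields, in $\Sch'(\R^{2d})$,
$$F(v,y):=\mu(v)\overline{\mu(v-y)}=\sum_{r'\in R'}\delta_{r'}(y)\otimes g_{r'}(v),\qquad g_{r'}:=\sum_{s'\in S'}d_{r's'}e^{2\pi i\langle\cdot,s'\rangle},$$
so $\supp F\subseteq\R^d\times R'$. But $F$ is the pullback of $\mu\otimes\bar\mu$ under $(v,y)\mapsto(v,v-y)$, hence $\supp F=\{(v,v-w):v,w\in\Lambda\}$ with $\Lambda:=\supp\mu$; therefore $\Lambda-\Lambda\subseteq R'$. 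Since $R'$ is locally finite, so is $\Lambda-\Lambda$, and as $0$ is then isolated in $\Lambda-\Lambda$, the set $\Lambda$ is uniformly discrete, with $\Lambda-\Lambda\subseteq M(R)$ for $M=A_0-C_0$. The analogous facts for $\Sigma:=\supp\widehat\mu$ follow from the identity $\mathcal A\widehat\mu(y,\eta)=e^{-2\pi i\langle\eta,y\rangle}\mathcal A\mu(-\eta,y)$, which makes $\mathcal A\widehat\mu$ a pure point measure supported on $S'\times(-R')$: one gets $\Sigma$ uniformly discrete with $\Sigma-\Sigma\subseteq N(S)$, $N=(B_0^{T})^{-1}$.

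To upgrade this to ``$\mu$ is a measure'' I would localize. Fix $\lambda_0\in\Lambda$; by uniform discreteness, on a fixed small ball $B(\lambda_0,\delta)$ one has $\mu=\mu_0:=\sum_{|\alpha|\le m}c_\alpha\partial^\alpha\delta_{\lambda_0}$, and it suffices to show $m=0$. Pick $\rho>0$, independent of $\lambda_0$, so small that $B(0,\rho)$ meets both $R'$ and $\Lambda-\Lambda$ only at $0$. For $\phi$ supported near $\lambda_0$ and $\psi\in\mathcal D(B(0,\rho))$, on one hand $\langle F,\phi\otimes\psi\rangle=\psi(0)\langle g_0,\phi\rangle$ (only $r'=0$ survives), and on the other $\langle F,\phi\otimes\psi\rangle=\langle\mu_0\otimes\overline{\mu_0},\phi(v)\psi(v-w)\rangle$ (only the diagonal point $(\lambda_0,\lambda_0)$ of $\Lambda\times\Lambda$ contributes, by the choice of $\rho$). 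Expanding the right-hand side by the Leibniz rule gives a finite combination of the $(\partial^\delta\psi)(0)$, $|\delta|\le 2m$, with coefficients built from the $(\partial^\gamma\phi)(\lambda_0)$; matching with the left-hand side, which depends on $\psi$ only through $\psi(0)$, kills all terms with $\delta\neq0$. The terms with $|\delta|=2m$ then give $\sum_{\alpha+\beta=\delta,\ |\alpha|=|\beta|=m}c_\alpha\overline{c_\beta}=0$ for every such $\delta$, i.e.\ $P\overline P\equiv0$ for $P(\xi)=\sum_{|\alpha|=m}c_\alpha\xi^\alpha$; since $\mathbb C[\xi]$ is a domain, $P\equiv0$, contradicting the choice of $m$ unless $m=0$. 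Hence $\mu=\sum_{\lambda\in\Lambda}a_\lambda\delta_\lambda$ is a (tempered) measure, and so is $\widehat\mu=\sum_{\sigma\in\Sigma}b_\sigma\delta_\sigma$; thus $\mu$ is a crystalline measure with uniformly discrete support and spectrum.

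For the last assertion, suppose $R$ is uniformly discrete (the case of $S$ is symmetric after replacing $\mu$ by $\widehat\mu$). Then $R'$, hence $\Lambda-\Lambda$, is uniformly discrete, i.e.\ $\Lambda$ is a Meyer set; combining this with the fact that $\mu$ is a crystalline measure, the rigidity of crystalline (equivalently, Fourier quasicrystal) measures supported on Meyer sets (cf.\ \cite{favorov, Lev-Reti}) forces $\mu$ to be a finite sum $\sum_{j=1}^N P_j\sum_{\lambda\in L+\theta_j}\delta_\lambda$ with $L$ a lattice and $P_j$ trigonometric polynomials, which is \eqref{eq:main-ambiguity}. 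The routine but delicate part of this program is the dimension-$d$ bookkeeping of the substitutions and of the partial-Fourier-transform argument — exactly where \cite{bfgo} was confined to $d=1$ — together with the local computation of the third paragraph; the conceptual crux is the final step, where the uniform discreteness of $R$ (or $S$) is precisely the hypothesis that turns the already-obtained crystalline measure into one supported on a Meyer set, so that Meyer-type structure theory applies.
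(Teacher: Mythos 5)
Your proposal is correct, and for the central step it takes a genuinely different route from ours; the first and last parts essentially coincide with the paper's argument in different packaging. Your reduction of $W_T(\mu)$ to the ambiguity function $\mathcal A\mu$ (using that invertibility of $T$ forces both $B_0$ and $A_0-C_0$ to be invertible) and the resulting inclusion $\Lambda-\Lambda\subseteq (A_0-C_0)(R)$ is exactly the content of Lemma \ref{lem:support-wignerT}(ii) combined with Lemma \ref{differ-discrete}, and your final appeal to rigidity of measures with uniformly discrete support and spectrum is our application of \cite[Theorem 3]{olevskii2}; that is the reference you actually need (rather than \cite{favorov,Lev-Reti}), and when it is $R$, hence $\Lambda-\Lambda$, that is u.d., the theorem is applied to $\widehat\mu$ and transferred back by Poisson summation, as you note for the symmetric case. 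Where you genuinely diverge is the proof that $\mu$ carries no derivatives of Dirac masses. We argue globally on the frequency side: Lemma \ref{tech-lem1} lets us compare $\langle W(\mu),\phi_1\otimes\phi_2\rangle$, which grows like $t^{2N}$ by Lemma \ref{tech-lem2}(ii), against $\langle W_T(\mu),\cdot\rangle$, which stays bounded in $t$ because the lower-right block of $T_0^{-1}T$ vanishes, and we let $t\to\infty$. You argue locally in physical space: testing $\mu(v)\overline{\mu(v-y)}$ against $\phi(v)\psi(y)$ with $\supp\psi$ meeting $R'$ only at $0$ shows the pairing depends on $\psi$ only through $\psi(0)$, and the Leibniz expansion at the single surviving point $(\lambda_0,\lambda_0)$ kills the coefficients of $\psi^{(\delta)}(0)$ with $|\delta|=2m$, producing the same algebraic identity $\sum_{\alpha+\beta=\delta}c_\alpha\overline{c_\beta}=0$ that we resolve via \cite[Lemma 5]{bfgo}. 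Your route buys two things: it never needs a global bound on the order of $\mu$ (the order may vary from point to point), and it makes transparent why no summability hypothesis like condition (i) of Theorem \ref{thm-main1} is required here — the only infinite sum that survives, $\sum_{s'}d_{0s'}\overline{\widehat\phi(s')}$, converges because it is the full double sum defining $W_T(\mu)$ with the terms $r'\neq 0$, which vanish, deleted; this one-sentence justification (and the analogous localization for $\widehat\mu$ via your identity for $\mathcal A\widehat\mu$) is the only thing you should spell out. Our route has the advantage of running in parallel with the proof of Theorem \ref{thm-main1}, so that both theorems share Lemmas \ref{tech-lem1} and \ref{tech-lem2}.
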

In this case we have that $\mu$ is a Fourier quasicrystal in the sense of Lev and Olevskii (see \cite{olevskii1}).

We shall precise our results in the following sections; here we limit ourselves to some observations in order to underline the main novelties with respect to what is already known. First of all, the conditions on the matrix $T$ in the previous two theorems have no intersections, in the sense that if $T$ satisfies the hypotheses of Theorem \ref{thm-main1} then it does not satisfy the hypotheses of Theorem \ref{thm-main2} and vice versa. The matrix-Wigner transforms considered in Theorem \ref{thm-main1} include the classical Wigner transform, as well as all the matrix-Wigner transforms belonging to the Cohen class, and in these cases conditions (ii) and (iii) are automatically satisfied. Then, comparing Theorem \ref{thm-main1} with \cite[Theorem 1]{bfgo} we observe that in this paper we extend the results of \cite{bfgo} to a class of matrix-Wigner transforms, in arbitrary dimension, but with the additional hypothesis (i); in the case of the classical Wigner this hypothesis is not necessary (and also for matrix-Wigner transforms in dimension $1$, cf. \cite[Theorem 15]{bfgo}), but in the general case considered here we could not avoid it. Observe however that the hypotheses of \cite[Theorem 1]{bfgo} imply (at least in dimension $d=1$) that $\sup_{(r,s)\in R\times S} |c_{r,s}|<+\infty$ (see \cite[Corollary 6]{bfgo}); then condition (i) does not represent a limitation with respect to the class considered in \cite{bfgo}, and Theorem \ref{thm-main1} allows for a remarkable enlargement of the class of time-frequency representations which can be used to ``detect'' quasicrystal structures.  
Concerning Theorem \ref{thm-main2}, the hypotheses on $T$ exclude its application to the classical Wigner; on the other hand the class considered in Theorem \ref{thm-main2} has as main example the Ambiguity function (see \eqref{Ambigfunct}), and for this class we prove here a stronger and somehow surprising result with respect to \cite{bfgo}, since here we only suppose that $R$ and $S$ are discrete, obtaining as a consequence that the supports of $\mu$ and $\widehat{\mu}$ are uniformly discrete.

\section{Preliminary results}

We start by fixing some notations. For a $n\times n$ matrix $M$ and a set $K\subset\mathbb{R}^n$ we write $M(K):=\{M\cdot k : k\in K\}$, where $M\cdot k$ indicates the product of the matrix $M$ by the column vector $k$. Moreover, for $M$ invertible, we indicate by $P_M$ the linear change of variable operator defined on $\mathcal{S}'(\mathbb{R}^n)$ that on $\Phi\in\mathcal{S}(\mathbb{R}^n)$ acts as
$$
P_M(\Phi)(x)=\Phi(Mx),\quad x\in\mathbb{R}^n.
$$ For $n = 2d$ and $M = T$ as in the introduction we can write
$$
W_T(\mu) = {\mathcal F}_2\left(P_T(\mu\otimes\overline{\mu})\right),\ \mu\in{\mathcal S}'({\mathbb R}^{d}),$$ where ${\mathcal F}_2$ denotes the partial Fourier transform
$$
{\mathcal F}_2 F(x,\omega) = \int_{{\mathbb R}^d} F(x,t) e^{-2\pi i \omega t}\ dt,\ \ x,\omega\in \mathbb{R}^d,$$
with usual standard extension to $F\in \mathcal S'(\mathbb R^{2d})$, which through this paper will be assumed to be conjugate-linear functionals. As usual, the polarized form of the matrix-Wigner transform is
$$
W_T(\mu,\nu) = {\mathcal F}_2\left(P_T(\mu\otimes\overline{\nu})\right),\ \mu,\nu\in{\mathcal S}'({\mathbb R}^{d}).
$$

\par\medskip 

\begin{definition}\label{ud}
	{\rm
	By a {\it ``discrete''} set we mean a subset $S\subset {\mathbb R}^{n}$ lacking accumulation points in ${\mathbb R}^{n},$ so it is closed. A set $A\subset {\mathbb R}^n$ is {\it ``uniformly discrete''} (u.d. from now on) if there is $\delta > 0$ such that $|r-s| \geq \delta$ whenever $s,r\in A, s\neq r.$
	
	For a set $E\subset
	\mathbb R^{2d}=\mathbb R^d_x\times\mathbb R^d_\omega$ we indicate
	the projections on the $x$ and $\omega$-coordinates as:
	$$
	\begin{array}{c}
		\Pi_1(E)=\{x\in\mathbb R^d: \exists \, \omega\in\mathbb R^d
		\hbox{\
			such\ that \ } (x,\omega)\in E\},\\
		\Pi_2(E)=\{\omega\in\mathbb R^d: \exists \, x\in\mathbb R^d
		\hbox{\ such\ that \ } (x,\omega)\in E\}.
	\end{array}
	$$
	}
\end{definition}
We now discuss some properties concerning supports. Taking into account the role of the partial Fourier transform in the definition of the matrix Wigner transform, the following result will be useful. It can be obtained with the same argument as in \cite[Lemma 8]{bfgo}, so we omit the proof. 
	\begin{lemma}\label{lem:Proj}
	Suppose that $\Psi\in\mathcal S'(\mathbb R^{2d})$. If
	$\Pi_1\supp\Psi$ or $\Pi_1\supp\mathcal F_2 \Psi$ are discrete sets in
	$\mathbb R^d$, then $\Pi_1\supp \Psi=\Pi_1\supp\mathcal F_2 \Psi$
	(and therefore both are discrete).
\end{lemma}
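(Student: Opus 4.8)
\textit{Proof plan.} The plan is to establish first the stronger, hypothesis-free identity of closures
$$
\overline{\Pi_1\supp\Psi}=\overline{\Pi_1\supp\mathcal F_2\Psi},
$$
valid for every $\Psi\in\mathcal S'(\mathbb R^{2d})$, and to invoke the discreteness assumption only at the very end in order to remove the closures. The guiding idea is that $\mathcal F_2$ acts solely on the second variable and therefore must not move the support in the first variable. To make this rigorous I would localize in $x$: for $\phi\in\mathcal D(\mathbb R^d)$ (a test function in the first slot) introduce the partial pairing $\Psi_\phi\in\mathcal S'(\mathbb R^d)$, the tempered distribution in $t$ defined by $\langle\Psi_\phi,\psi\rangle=\langle\Psi,\phi\otimes\psi\rangle$. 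For an open set $U\subseteq\mathbb R^d$, a point $x_0$ lies outside $\overline{\Pi_1\supp\Psi}$ precisely when there is a neighbourhood $U\ni x_0$ on which $\Psi$ vanishes on the slab $U\times\mathbb R^d$, and by density of finite sums $\sum_i\phi_i\otimes\psi_i$ in $\mathcal D(U\times\mathbb R^d)$ this is equivalent to $\Psi_\phi=0$ for every $\phi\in\mathcal D(U)$.

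The heart of the argument is that this slab-vanishing condition is invariant under $\mathcal F_2$. Since the partial Fourier transform touches only the $t$-variable, pairing against $\phi$ in the first slot commutes with it, and a direct computation from the definitions gives $(\mathcal F_2\Psi)_\phi=\mathcal F(\Psi_\phi)$, where $\mathcal F$ is the full Fourier transform on $\mathcal S'(\mathbb R^d_t)$ (up to the convention-dependent sign or conjugation, which is irrelevant here). Because $\mathcal F$ is a bijection of $\mathcal S'(\mathbb R^d)$, we obtain $\Psi_\phi=0\iff(\mathcal F_2\Psi)_\phi=0$. Hence, for every open $U$, $\Psi$ vanishes on $U\times\mathbb R^d$ if and only if $\mathcal F_2\Psi$ does, so the largest such open set $A$ (the union of all of them) is the same for both. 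Its complement equals $\overline{\Pi_1\supp\Psi}$ computed from $\Psi$, and equals $\overline{\Pi_1\supp\mathcal F_2\Psi}$ computed from $\mathcal F_2\Psi$, which yields the claimed identity of closures.

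It then remains to drop the closures using the hypothesis. Suppose $\Pi_1\supp\Psi$ is discrete; being discrete it is closed, so $\Pi_1\supp\Psi=\overline{\Pi_1\supp\Psi}=\overline{\Pi_1\supp\mathcal F_2\Psi}=:B$, and $B$ is discrete. Now $\Pi_1\supp\mathcal F_2\Psi\subseteq B$, and any subset of a discrete set has no accumulation points in $\mathbb R^d$, hence is itself discrete and therefore closed; so it coincides with its own closure $B$. This gives $\Pi_1\supp\mathcal F_2\Psi=B=\Pi_1\supp\Psi$, with both discrete. The case in which $\Pi_1\supp\mathcal F_2\Psi$ is the one assumed discrete is identical after replacing $\Psi$ by $\mathcal F_2\Psi$ and using that $\mathcal F_2$ is invertible.

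The single genuinely technical point — and the step I expect to require the most care — is the reduction to the partial pairing, namely that $\Psi$ vanishes on the slab $U\times\mathbb R^d$ if and only if $\Psi_\phi=0$ for all $\phi\in\mathcal D(U)$. The subtlety is that applying $\mathcal F_2$ to a product test function $\phi\otimes\psi$ leaves $\phi$ (supported in $U$) untouched but replaces $\psi$ by its Fourier transform, which is no longer compactly supported in $t$; this is exactly why one cannot argue naively with compactly supported test functions in the second variable and must instead pass through the tempered distribution $\Psi_\phi$ in the full $t$-variable, invoking the density of tensor products in $\mathcal D(U\times\mathbb R^d)$ (equivalently, the Schwartz kernel theorem). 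Everything else is formal.
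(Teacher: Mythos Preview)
The paper does not actually prove this lemma; it simply writes ``It can be obtained with the same argument as in \cite[Lemma 8]{bfgo}, so we omit the proof.'' Hence there is no in-paper argument to compare against. Your approach---localizing in the first variable via the partial pairing $\Psi_\phi$, observing that $(\mathcal F_2\Psi)_\phi$ is the Fourier transform of $\Psi_\phi$, and deducing the equality of closures $\overline{\Pi_1\supp\Psi}=\overline{\Pi_1\supp\mathcal F_2\Psi}$ before invoking discreteness to drop the bars---is correct and is the natural way to formalize the intuition that $\mathcal F_2$ leaves the first variable untouched; it is very likely the argument of \cite[Lemma 8]{bfgo} as well.

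Two minor remarks on execution. First, in showing that ``$\Psi$ vanishes on $U\times\mathbb R^d$'' implies $\Psi_\phi=0$ in $\mathcal S'(\mathbb R^d)$ for $\phi\in\mathcal D(U)$, note that $\phi\otimes\psi$ with $\psi\in\mathcal S$ is not compactly supported; you should either approximate $\psi$ by compactly supported cutoffs $\rho_n\psi\to\psi$ in $\mathcal S$, or equivalently observe that $\Psi_\phi$ vanishes on $\mathcal D(\mathbb R^d)$ and hence on $\mathcal S(\mathbb R^d)$ by density. Second, the reference to the Schwartz kernel theorem is a slight overreach: what you actually use is just the density of finite sums of elementary tensors in $\mathcal D(U\times\mathbb R^d)$, which is more elementary. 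Neither point is a gap; your proof plan is sound.
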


Also, \cite[Proposition 9]{bfgo} can be formulated as follows.	
\begin{proposition}\label{prop:T}
	Let $M$ be a real invertible $n\times n$ matrix. Then for every $\Psi\in \mathcal S'(\mathbb R^n)$ we
	have $\supp(P_M(\Psi))=M^{-1}(\supp \Psi)$.
\end{proposition}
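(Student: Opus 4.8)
The plan is to derive the set identity from the standard characterization of the support of a distribution as the complement of the largest open set on which it vanishes, transporting the vanishing property from $\Psi$ to $P_M(\Psi)$ by means of the duality description of the operator $P_M$. Recall that a distribution vanishes on an open set $U$ precisely when it pairs to zero with every test function supported in $U$.

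First I would record the action of $P_M$ by duality. Starting from the defining relation $P_M(\Phi)(x)=\Phi(Mx)$ on test functions and performing the change of variables $y=Mx$, one obtains for $\Psi\in\mathcal S'(\mathbb R^n)$ and every $\phi\in\mathcal S(\mathbb R^n)$ an identity of the form
\[
\langle P_M(\Psi),\phi\rangle = |\det M|^{-1}\,\langle \Psi,\, P_{M^{-1}}(\phi)\rangle ,
\]
where the conjugate-linear convention of the paper is used for the pairing. The precise value of the nonzero Jacobian constant is irrelevant for what follows; what matters is that $P_M(\Psi)$ pairs to zero with $\phi$ exactly when $\Psi$ pairs to zero with $P_{M^{-1}}(\phi)$.

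Next I would use that $M$ is a linear homeomorphism of $\mathbb R^n$. For any $\phi\in\mathcal S(\mathbb R^n)$ one has $\supp(P_{M^{-1}}(\phi))=M(\supp\phi)$, and $\phi\mapsto P_{M^{-1}}(\phi)$ is a bijection of $\mathcal S(\mathbb R^n)$ onto itself that carries the test functions supported in an open set $U$ precisely onto those supported in the open set $M(U)$. Combining this with the duality identity shows that $P_M(\Psi)$ vanishes on an open set $U$ if and only if $\Psi$ vanishes on $M(U)$.

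Finally I would conclude by a pointwise complement argument. A point $x_0$ lies outside $\supp(P_M(\Psi))$ iff $P_M(\Psi)$ vanishes on some open neighborhood $U$ of $x_0$, iff $\Psi$ vanishes on the open neighborhood $M(U)$ of $Mx_0$; since every open neighborhood of $Mx_0$ is of the form $M(U)$ for a suitable neighborhood $U$ of $x_0$, this is equivalent to $Mx_0\notin\supp\Psi$, that is $x_0\notin M^{-1}(\supp\Psi)$. Passing to complements gives $\supp(P_M(\Psi))=M^{-1}(\supp\Psi)$. The argument is essentially formal; the only point requiring care is the faithful matching of open sets and of test-function supports under the homeomorphism $M$, together with the correct duality formula, so I do not expect any genuine obstacle beyond this bookkeeping.
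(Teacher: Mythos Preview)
Your argument is correct: the duality formula, the transport of supports under the linear change of variables, and the complement-of-vanishing characterization of the support combine exactly as you indicate to yield $\supp(P_M(\Psi))=M^{-1}(\supp\Psi)$. Note that the paper does not actually supply a proof of this proposition; it simply records the statement as a reformulation of \cite[Proposition 9]{bfgo}, so your write-up provides the standard details that the paper defers to the reference.
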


The following lemma will be crucial in the proof of Theorem \ref{thm-main2}:
\begin{lemma}\label{differ-discrete}
	Let $S\subset {\mathbb R}^{n}$ be a discrete set. If $\Sigma-\Sigma\subset S$ then $\Sigma$ is uniformly discrete.
\end{lemma}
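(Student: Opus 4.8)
The plan is to reduce everything to the single observation that $0$ is an isolated point of $S$. If $\Sigma$ has at most one element it is uniformly discrete for trivial reasons, so I would first assume $\Sigma$ contains at least two points. Then $0 = \sigma - \sigma \in \Sigma - \Sigma \subseteq S$, so in particular $0 \in S$; disposing of the degenerate cases up front is the only point that needs a moment's care, since it is what guarantees $0$ actually belongs to $S$.

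Next I would use that $S$ is discrete in the sense of Definition \ref{ud}: it has no accumulation points in $\mathbb{R}^n$, so $0$ is not an accumulation point of $S$, and hence there exists $\delta > 0$ such that the punctured ball $\{x \in \mathbb{R}^n : 0 < |x| < \delta\}$ contains no point of $S$.

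Finally, take any two distinct points $\sigma, \sigma' \in \Sigma$. By hypothesis $\sigma - \sigma' \in \Sigma - \Sigma \subseteq S$, and $\sigma - \sigma' \neq 0$ because $\sigma \neq \sigma'$; therefore $\sigma - \sigma'$ lies outside the punctured ball above, i.e. $|\sigma - \sigma'| \geq \delta$. Since $\delta$ is independent of the chosen pair, this is exactly the statement that $\Sigma$ is uniformly discrete. I do not expect any genuine obstacle in this argument; the whole content is the passage from ``no accumulation point at $0$'' to a uniform separation constant, which is immediate once one knows $0 \in S$.
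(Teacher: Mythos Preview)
Your proof is correct and follows essentially the same approach as the paper: both arguments hinge on the fact that $0\in S$ (since $0\in\Sigma-\Sigma$ for nonempty $\Sigma$) is an isolated point, yielding a uniform $\delta>0$ that separates any two distinct points of $\Sigma$. Your version is slightly more careful in explicitly disposing of the trivial case $|\Sigma|\le 1$, but the core reasoning is identical.
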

\begin{proof}
	Since $0\in S$ and $S$ is discrete there is $\delta > 0$ such that $0 < |u| < \delta$ implies $u\notin S.$ In particular $|x-y|\geq \delta$ for every $x,y\in \Sigma$ with $x\neq y.$
\end{proof}

\begin{lemma}\label{lem:support-wignerT} Let $T$ be an invertible real matrix with $T^{-1} = \left(
	\begin{array}{cc}
		A & B \\
		C & D \\
	\end{array}
	\right),
	$ where $A, B, C, D$ are submatrices of dimension $d\times d,$ and let $\mu\in {\mathcal S}^\prime({\mathbb R}^d)$ satisfy that $\Pi_1\left(\supp W_T(\mu)\right)$ is a discrete subset of ${\mathbb R}^d.$ Then 
	\begin{equation}\label{Pisupp}
	\Pi_1\left(\supp W_T(\mu)\right) = A(\supp \mu) + B(\supp \mu).
	\end{equation}
Furthermore:

i) If $\det(A+B)\ne 0$, then $\supp\mu$ is discrete.

ii) If $B=-A$ and $\det A\ne0$, then $\supp \mu$ is uniformly discrete.

\end{lemma}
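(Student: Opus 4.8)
We want to compute $\Pi_1(\supp W_T(\mu))$ and then read off discreteness of $\supp\mu$. The natural starting point is the factorization $W_T(\mu) = \mathcal F_2\bigl(P_T(\mu\otimes\overline\mu)\bigr)$ recorded in the preliminaries. Since $\Pi_1$ only involves the $x$-variable and $\mathcal F_2$ acts in the second ($t$- or $\omega$-) variable, applying Lemma~\ref{lem:Proj} to $\Psi = P_T(\mu\otimes\overline\mu)$ (whose hypothesis is exactly that $\Pi_1\supp W_T(\mu)$ is discrete) gives
$$
\Pi_1\bigl(\supp W_T(\mu)\bigr) = \Pi_1\bigl(\supp P_T(\mu\otimes\overline\mu)\bigr).
$$
Next, by Proposition~\ref{prop:T}, $\supp P_T(\mu\otimes\overline\mu) = T^{-1}\bigl(\supp(\mu\otimes\overline\mu)\bigr) = T^{-1}(\supp\mu\times\supp\mu)$. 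Writing $T^{-1}=\begin{pmatrix}A&B\\C&D\end{pmatrix}$, a point $(x,t)$ lies in $T^{-1}(\supp\mu\times\supp\mu)$ iff there exist $u,v\in\supp\mu$ with $x = Au+Bv$, $t = Cu+Dv$; projecting onto $x$ and allowing $v$ (hence $t$) to range freely over $\supp\mu$ yields $\Pi_1 = A(\supp\mu)+B(\supp\mu)$. This establishes \eqref{Pisupp}.

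For part~(i): from \eqref{Pisupp}, if $\det(A+B)\neq 0$ then the diagonal $\{(A+B)u : u\in\supp\mu\}$ is a subset of the discrete set $\Pi_1(\supp W_T(\mu))$; being the image of $\supp\mu$ under the invertible linear map $A+B$, discreteness transfers back to $\supp\mu$ itself (the preimage of a discrete set under a linear isomorphism is discrete). Hence $\supp\mu$ is discrete. For part~(ii): when $B=-A$ and $\det A\neq 0$, \eqref{Pisupp} becomes
$$
\Pi_1\bigl(\supp W_T(\mu)\bigr) = A(\supp\mu) - A(\supp\mu) = A(\supp\mu - \supp\mu).
$$
Thus $\supp\mu - \supp\mu \subset A^{-1}\bigl(\Pi_1(\supp W_T(\mu))\bigr)$, which is discrete since $A^{-1}$ is a linear isomorphism. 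Now apply Lemma~\ref{differ-discrete} with $\Sigma = \supp\mu$ and $S = A^{-1}\bigl(\Pi_1(\supp W_T(\mu))\bigr)$ to conclude that $\supp\mu$ is uniformly discrete.

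The only delicate point is the very first step: one must be careful that Lemma~\ref{lem:Proj} is applicable, i.e. that the roles of the two variables match up (the lemma is about $\Pi_1$ commuting with $\mathcal F_2$, and here $\mathcal F_2$ is precisely the partial transform in the variable complementary to $\Pi_1$), and that $\supp(\mu\otimes\overline\mu)$ genuinely equals the product set $\supp\mu\times\supp\mu$ — both are standard but worth stating cleanly. Everything after that is linear algebra: translating "image/preimage under an invertible matrix preserves discreteness" and invoking the two cited lemmas. I do not expect a substantive obstacle beyond keeping the bookkeeping of $A,B,C,D$ versus $A_0,B_0,C_0,D_0$ straight.
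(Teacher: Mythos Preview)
Your proposal is correct and follows essentially the same argument as the paper: apply Lemma~\ref{lem:Proj} to $\Psi = P_T(\mu\otimes\overline\mu)$, then Proposition~\ref{prop:T} to obtain \eqref{Pisupp}, and finally use the inclusion $(A+B)(\supp\mu)\subset\Pi_1(\supp W_T(\mu))$ for (i) and Lemma~\ref{differ-discrete} for (ii). The only cosmetic difference is that in (ii) the paper first concludes $A(\supp\mu)$ is uniformly discrete and then transfers via $A^{-1}$, whereas you pull back by $A^{-1}$ first and then invoke Lemma~\ref{differ-discrete}; these are equivalent.
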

\begin{proof} We apply Lemma \ref{lem:Proj} to $\Psi = P_T(\mu\otimes\overline{\mu})$ to get 
$$
\Pi_1\left(\supp W_T(\mu)\right) = \Pi_1\left(\supp {\mathcal F}_2\left(P_T(\mu\otimes\overline{\mu})\right)\right) = \Pi_1\left(\supp P_T(\mu\otimes\overline{\mu})\right).$$ According to Proposition \ref{prop:T}, this set coincides with 
\begin{equation}\label{proj}
\Pi_1\left(T^{-1}\left(\supp\mu \times \supp\mu\right)\right) = A(\supp \mu) + B(\supp \mu).
\end{equation}

As $A(\supp\mu) + B(\supp\mu)\supseteq (A+B)(\supp\mu)$, we have that $(A+B)(\supp\mu)$ is discrete. If $\det(A+B)\ne 0$, the invertibility of $A+B$ implies that also $\supp\mu $ is discrete. If instead $B=-A$, from \eqref{proj} it follows that $A(\supp\mu) - A(\supp\mu)$ is discrete which, by Lemma \ref{differ-discrete}, implies $A(\supp\mu)$ uniformly discrete. If $A$ is invertible, we have that $\supp\mu$ is also uniformly discrete.
\end{proof}

\begin{comment}

\begin{lemma}\label{lem:support-wigner-2} Let $T:{\mathbb R}^{2d}\to {\mathbb R}^{2d}$ invertible with $T^{-1} = \left(
	\begin{array}{cc}
		A & B \\
		C & D \\
	\end{array}
	\right),
	$ where $A$ is invertible and let $\mu\in {\mathcal S}^\prime({\mathbb R}^d)$ satisfy that $\Lambda = A(\supp \mu) + B(\supp \mu)$ is discrete. Then $\pi_1\left(\supp W_T(\mu)\right) = \Lambda.$
\end{lemma}
\begin{proof}
Since $A$ is invertible then $\supp \mu$ is a discrete set. Hence, for every $\Phi_1, \Phi_2\in {\mathcal S}({\mathbb R}^d)$ we can find constants $c_{r,s}^{\alpha,\beta}$ such that
$$
\langle W_T(\mu), \Phi_1\otimes\Phi_2\rangle = \sum_{|\alpha|\leq N, |\beta|\leq N}\sum_{r,s\in \supp\mu}c_{r,s}^{\alpha,\beta}\left(D^\alpha\Phi_1\right)(Ar+Bs)\left(D^\beta\overline{\widehat{\Phi_2}}\right)(Cr+Ds).$$ The hypothesis imply that $\Lambda$ is a closed subset of ${\mathbb R}^d.$ If $\Phi_1\in {\mathcal D}({\mathbb R}^d)$ and $\supp\Phi_1 \cap \Lambda = \emptyset$ then $\langle W_T(\mu), \Phi_1\otimes\Phi_2\rangle = 0$ and it follows that $\supp W_T(\mu)\subset \Lambda \times {\mathbb R}^d.$ Hence $\pi_1\left(\supp W_T(\mu)\right)$ is discrete and we can apply Lemma \ref{lem:support-wignerT}.
\end{proof}
\end{comment}

We remark that condition (i) contains as primary examples all representations of the Cohen class, see \eqref{T-Cohen}-\eqref{T-Cohen-inv}, and in particular the Wigner transform. The most important example satisfying (ii) is the {\it ambiguity function}, which is defined, for $f\in L^2({\mathbb R}^d),$ as 
\begin{equation}\label{Ambigfunct}
\mathop{Amb}(f)(x, \omega)=\displaystyle \int_{{\mathbb R}^d}f(t+\frac{x}{2})\overline{f(t-\frac{x}{2})} e^{-2\pi i  \omega t}dt,\ \ x,\omega\in {\mathbb R}^d.
\end{equation}
Then $\mathop{Amb}(\mu) = W_T(\mu)$ for $\mu\in {\mathcal S}'({\mathbb R}^d)$ where $T = \left(\begin{array}{cc} \frac{1}{2}\ \Id &\Id \\ -\frac{1}{2}\ \Id &\Id \end{array}\right)$. This representation occurs naturally in radar applications, where is often called radar ambiguity function (see e.g. \cite[Chapter 4]{Cook67}, \cite{Jans97}).

The following proposition specifies a class of matrix-Wigner transforms for which we have uniform discreteness of $\supp\mu$. The main example is the ambiguity function. 

\begin{proposition} Let $A_0, C_0$ be $d\times d$ matrices with $\det(A_0-C_0)\neq 0$ and consider $T_{A_0,C_0} = \left(
	\begin{array}{cc}
		A_0 & \Id \\
		C_0 & \Id \\
	\end{array}
	\right).
	$ If $\Pi_1\left(\supp W_{T_{A_0,C_0}}(\mu)\right)$ is discrete then 
	\begin{equation}\label{Pisuppdiff}
	\Pi_1\left(\supp W_{T_{A_0,C_0}}(\mu)\right) = (A_0-C_0)^{-1}(\supp\mu - \supp\mu),
	\end{equation}
	and $\supp\mu$ is uniformly discrete. In particular for ambiguity function
	$$
	\Pi_1\left(\supp Amb(\mu)\right) = \supp\mu - \supp\mu.
	$$
\end{proposition}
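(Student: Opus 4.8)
The plan is to obtain this proposition as a direct corollary of Lemma~\ref{lem:support-wignerT}, the only preliminary task being to identify the block structure of $T_{A_0,C_0}^{-1}$. First I would write $T_{A_0,C_0}^{-1}=\left(\begin{array}{cc} A & B\\ C & D\end{array}\right)$ in $d\times d$ blocks and impose $T_{A_0,C_0}\,T_{A_0,C_0}^{-1}=\Id$; multiplying out the blocks gives the four relations $A_0A+C=\Id$, $C_0A+C=0$, $A_0B+D=0$, $C_0B+D=\Id$. Subtracting the first pair and the second pair yields $(A_0-C_0)A=\Id$ and $(A_0-C_0)B=-\Id$, so that, using $\det(A_0-C_0)\ne 0$, we get $A=(A_0-C_0)^{-1}$ and $B=-A$. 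Thus $T_{A_0,C_0}$ falls precisely into case (ii) of Lemma~\ref{lem:support-wignerT}, with $A$ invertible and $B=-A$.

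Next, assuming $\Pi_1\left(\supp W_{T_{A_0,C_0}}(\mu)\right)$ is discrete, I would invoke \eqref{Pisupp} of Lemma~\ref{lem:support-wignerT} to write $\Pi_1\left(\supp W_{T_{A_0,C_0}}(\mu)\right)=A(\supp\mu)+B(\supp\mu)=A(\supp\mu)-A(\supp\mu)$, and then use linearity of $A$ to rewrite this last set as $A(\supp\mu-\supp\mu)=(A_0-C_0)^{-1}(\supp\mu-\supp\mu)$, which is exactly \eqref{Pisuppdiff}. The uniform discreteness of $\supp\mu$ is then nothing but the conclusion of part (ii) of Lemma~\ref{lem:support-wignerT} (which itself rests on Lemma~\ref{differ-discrete}). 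Finally, specializing to the ambiguity function, where $A_0=\tfrac{1}{2}\Id$ and $C_0=-\tfrac{1}{2}\Id$, we have $A_0-C_0=\Id$, hence $(A_0-C_0)^{-1}=\Id$, and \eqref{Pisuppdiff} collapses to $\Pi_1\left(\supp Amb(\mu)\right)=\supp\mu-\supp\mu$.

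I do not expect any serious obstacle here: the statement is essentially a repackaging of Lemma~\ref{lem:support-wignerT}. The one point to be careful about is the elementary computation of $T_{A_0,C_0}^{-1}$ — in particular verifying that $B=-A$ and that $A$ is invertible, so that it is part (ii), and not part (i), of the lemma that is relevant — but this is routine linear algebra, and the analytic content (the identity for $\Pi_1\supp W_T$ and the passage from ``$A(\supp\mu)-A(\supp\mu)$ discrete'' to ``$\supp\mu$ uniformly discrete'') has already been established earlier in the section.
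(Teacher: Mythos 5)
Your proof is correct and follows essentially the same route as the paper: both rest on computing the block inverse of $T_{A_0,C_0}$ (the paper writes it out in full, you derive $A=(A_0-C_0)^{-1}$, $B=-A$ from the block equations), then apply \eqref{Pisupp} and Lemma \ref{lem:support-wignerT}(ii) (equivalently, Lemma \ref{differ-discrete}) to get uniform discreteness. The only cosmetic difference is that the paper exhibits the full inverse matrix, which also makes the invertibility of $T_{A_0,C_0}$ explicit rather than presupposing it.
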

\begin{proof}	
The equality \eqref{Pisuppdiff} follows from \eqref{Pisupp} of Lemma \ref{lem:support-wignerT} and the fact that 
$$
T_{A_0,C_0}^{-1} = 
	\left(
	\begin{array}{cc}
		(A_0-C_0)^{-1} & -(A_0-C_0)^{-1} \\
		-C_0(A_0-C_0)^{-1} & A_0(A_0-C_0)^{-1} \\
	\end{array}
	\right).
$$ 
As $A_0-C_0$ is invertible, the discreteness of $(A_0-C_0)^{-1}(\supp\mu - \supp\mu)$ implies that of $\supp\mu - \supp\mu$, which by Lemma \ref{differ-discrete} implies the uniform discreteness of $\supp\mu$.
\end{proof}

The previous results can be contrasted with the following counterexample which shows that not all the cases of discrete support of $\mu\in \mathcal S'(\mathbb R^d)$ can be detected by the projection of matrix-Wigner transforms.
It is based on the counterexample to Lagarias' conjecture contained in \cite{favorov}. 

\begin{example}{\rm We consider the uniformly discrete set $$\Lambda= {\mathbb Z}^2 \cup \left\{\left(\sqrt{2}m_1, m_2 + \frac{1}{2}\right):\ m_1, m_2\in {\mathbb Z}\right\}.$$ Then it is not possible to find $\mu\in{\mathcal S}^\prime({\mathbb R}^2)$ and an invertible $4\times 4$ matrix $T$ with $T^{-1} = \left(
	\begin{array}{cc}
		A & B \\
		C & D \\
	\end{array}
	\right),$ $\mbox{det}(A)\neq 0, \mbox{det}(B)\neq 0,$ such that $\supp\mu = \Lambda$ and $\Pi_1\left(\supp W_T(\mu)\right)$ is discrete. }
\end{example}
\begin{proof} On the contrary $A(\Lambda) + B(\Lambda)$ is a discrete set, which is equivalent to the fact that $\Lambda + (A^{-1}B)(\Lambda)$ is discrete. We now put
	$
	A^{-1}B = \left(
	\begin{array}{cc}
		a & b \\
		c & d \\
	\end{array}
	\right).$ Then, the following two sets are discrete
	$$
	M:= {\mathbb Z}^2 + \left(
	\begin{array}{cc}
		a & b \\
		c & d \\
	\end{array}
	\right){\mathbb Z}^2,\ \ N:= {\mathbb Z}^2 + \left(
	\begin{array}{cc}
		a & b \\
		c & d \\
	\end{array}
	\right)\Lambda_1,$$ where $\Lambda_1 = \left(
	\begin{array}{cc}
		\sqrt{2} & 0 \\
		0 & 1 \\
	\end{array}
	\right){\mathbb Z}^2 + \left(
	\begin{array}{c}
		 0 \\
		\frac{1}{2} \\
	\end{array}
	\right).$ We observe that $$M = \left\{\left(m+am'+bn',n+{\color{red}c}m'+dn'\right):\ m,n,m',n'\in {\mathbb Z}\right\}.$$ After taking $n'=0$ we conclude that 
	$$
	\left\{\left(m+am', n+cm'\right):\ m,n,m'\in {\mathbb Z}\right\}$$ is discrete. Then also the set of fractional parts of $\left\{m'(a,c):\ m'\in {\mathbb Z}\right\}$ is discrete. This implies that $a$ and $c$ are rational numbers, since otherwise the previous set of fractional parts is infinite and it has some accumulation point in $[0,1]^2.$ A similar argument, but using the fact that the set $N$ is discrete, allows us to conclude that also $\sqrt{2}a, \sqrt{2}c\in {\mathbb Q}.$ This is a contradiction since $A^{-1}B$ is invertible, which implies that at least one of the coefficients $a$ or $c$ is different from $0.$
\end{proof}

\begin{remark}{\rm An example of $\mu\in {\mathcal S}'({\mathbb R}^2)$ supported on $\Lambda$ is given in \cite{favorov} whose Fourier transform is also supported on a uniformly discrete set. However, for $T$ as in the previous Example, $\Pi_1\left(\supp W_T(\mu)\right)$ is not discrete.}
\end{remark}

%\section{Main results}\label{sec-main}

%In this section we prove Theorems \ref{thm-main1} and \ref{thm-main2}. 

In the following we write $T_0$ for the matrix
\begin{equation}\label{T0-matrix}
T_0=\left(\begin{array}{cc} \Id &\frac{1}{2}\Id \\[0.2cm] \Id &-\frac{1}{2}\Id \end{array}\right).
\end{equation}
$T_0$ is associated to the classical Wigner transform, in the sense that $W_{T_0}(\mu)=W(\mu)$ for every $\mu\in\mathcal{S}'(\mathbb{R}^d)$.

We start by proving the following technical lemmas.

\begin{lemma}\label{tech-lem1}
Let $\mu\in \Sch'(\R^d)$, $\Phi\in\Sch(\R^d)$, and let $T$ be an invertible $2d\times 2d$ matrix. Then
\begin{equation}\label{rel-W-WT}
\langle W(\mu),\Phi\rangle = |\det T|\langle W_T(\mu),\mathcal{F}_2 P_{T_0^{-1}T}(\mathcal{F}_2^{-1}\Phi)\rangle.
\end{equation}
\end{lemma}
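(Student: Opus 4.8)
The plan is to reduce everything to a clean identity between the classical Wigner transform and the matrix-Wigner transform $W_T$, exploiting the fact that both are, up to a linear change of variables, a partial Fourier transform of the tensor $\mu\otimes\overline\mu$. Concretely, recall from the preliminaries that $W(\mu)=\mathcal F_2(P_{T_0}(\mu\otimes\overline\mu))$ and $W_T(\mu)=\mathcal F_2(P_T(\mu\otimes\overline\mu))$. The first step is to write $P_{T_0}(\mu\otimes\overline\mu)$ in terms of $P_T(\mu\otimes\overline\mu)$: since $P_{T_0}=P_{T_0^{-1}T}\circ P_T$ (because $P_M P_N = P_{NM}$ under our convention $P_M\Phi(x)=\Phi(Mx)$; one has to be a little careful here with the order, but the composition rule for these operators is elementary), we get
\begin{equation}\label{eq:plan-1}
W(\mu) = \mathcal F_2\bigl(P_{T_0^{-1}T}\,(P_T(\mu\otimes\overline\mu))\bigr) = \mathcal F_2\bigl(P_{T_0^{-1}T}\,(\mathcal F_2^{-1} W_T(\mu))\bigr).
\end{equation}

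The second step is purely formal manipulation of the pairing. Testing \eqref{eq:plan-1} against $\Phi\in\Sch(\R^d)$, I would move $\mathcal F_2$ onto the test function (recall our functionals are conjugate-linear, and $\mathcal F_2$ on $\Sch'$ is defined by duality against $\Sch$), then move $P_{T_0^{-1}T}$ onto the test function as well. The transpose/adjoint of a change-of-variables operator $P_M$ acting on distributions is, up to the Jacobian factor $|\det M|^{-1}$, the operator $P_{M^{-1}}$; applied to $M=T_0^{-1}T$ this produces $|\det(T_0^{-1}T)|^{-1}=|\det T_0|\,|\det T|^{-1}$. Since $\det T_0 = \det\!\left(\begin{smallmatrix}\Id & \frac12\Id\\ \Id & -\frac12\Id\end{smallmatrix}\right)$ evaluates (block computation: $\det(-\frac12\Id - \frac12\Id)=\det(-\Id)$ times $\det\Id$, i.e. $(-1)^d$) to a number of modulus $1$, the constant that survives is exactly $|\det T|$. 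Collecting the operators in the right order gives
\begin{equation}\label{eq:plan-2}
\langle W(\mu),\Phi\rangle = |\det T|\,\langle W_T(\mu),\, \mathcal F_2\, P_{T_0^{-1}T}\,(\mathcal F_2^{-1}\Phi)\rangle,
\end{equation}
which is the claimed identity. I would present this as a short chain of equalities, each justified by the definitions recalled in Section 2 (definition of $W_T$ via $\mathcal F_2\circ P_T$, Proposition on supports not needed here, and the duality definitions of $\mathcal F_2$ and $P_M$ on tempered distributions).

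The main thing to get right — and the only place where an error could creep in — is the bookkeeping of compositions and transposes: the order in which $P_{T_0^{-1}T}$ and $\mathcal F_2$ compose, whether one needs $P_{T_0^{-1}T}$ or its inverse on the test-function side, and making sure the various Jacobian factors telescope to precisely $|\det T|$ rather than $|\det T|^{\pm1}$ times a stray $|\det T_0|^{\pm1}$. I would handle this by first proving the identity for $\mu\in\Sch(\R^d)$, where everything is a genuine absolutely convergent integral and the substitution $(x,t)\mapsto (T_0^{-1}T)(x,t)$ can be carried out explicitly with its honest Jacobian, and only then invoke density of $\Sch$ in $\Sch'$ together with the (weak-$*$) continuity of all the operators involved to extend to general $\mu\in\Sch'(\R^d)$. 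No genuine obstacle is expected; the lemma is a normalization identity whose role is to transfer information from $W_T(\mu)$ back to the classical Wigner transform $W(\mu)$ in the proof of Theorem \ref{thm-main1}.
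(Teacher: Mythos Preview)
Your approach is essentially the same as the paper's: both exploit $W(\mu)=\mathcal F_2 P_{T_0}(\mu\otimes\overline\mu)$ and $W_T(\mu)=\mathcal F_2 P_T(\mu\otimes\overline\mu)$, relate $P_{T_0}$ to $P_T$ via the composition rule $P_MP_N=P_{NM}$, and track the Jacobian (using $|\det T_0|=1$). The paper does this by moving all operators to the test-function side, inserting the identity $P_{T^{-1}}\mathcal F_2^{-1}\mathcal F_2 P_T$, and then recognising $P_TP_{T_0^{-1}}=P_{T_0^{-1}T}$; your version rewrites $W(\mu)$ at the distribution level first and then dualises.

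One concrete slip to fix when you write it up: in your display \eqref{eq:plan-1} the factorisation $P_{T_0}=P_{T_0^{-1}T}\circ P_T$ is wrong under the rule $P_MP_N=P_{NM}$; the correct identity is $P_{T_0}=P_{T^{-1}T_0}\circ P_T$, so the intermediate formula should read $W(\mu)=\mathcal F_2\bigl(P_{T^{-1}T_0}(\mathcal F_2^{-1}W_T(\mu))\bigr)$. The matrix $T_0^{-1}T$ then reappears on the test-function side after dualising $P_{T^{-1}T_0}$, exactly as in \eqref{eq:plan-2}. You already flag this bookkeeping as the delicate point, so just carry it out once with care and the argument goes through verbatim.
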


\begin{proof}
Since $W(\mu)=W_{T_0}(\mu)=\mathcal{F}_2 P_{T_0}(\mu\otimes\overline{\mu})$ we have
\begin{equation*}
\begin{split}
\langle W(\mu),\Phi\rangle &=\langle \mu\otimes\overline{\mu},P_{T_0^{-1}}(\mathcal{F}_2^{-1}\Phi)\rangle \\
&=\langle \mu\otimes\overline{\mu},P_{T^{-1}}\mathcal{F}_2^{-1}\mathcal{F}_2 P_T P_{T_0^{-1}}(\mathcal{F}_2^{-1}\Phi)\rangle \\
&=|\det T|\langle \mathcal{F}_2P_{T}(\mu\otimes\overline{\mu}),\mathcal{F}_2 P_T P_{T_0^{-1}}(\mathcal{F}_2^{-1}\Phi)\rangle \\
&=|\det T|\langle W_T(\mu),\mathcal{F}_2 P_{T_0^{-1}T}(\mathcal{F}_2^{-1}\Phi)\rangle,
\end{split}
\end{equation*}
where we have used the facts that $P_M^{-1}=P_{M^{-1}}$ and $P_MP_N=P_{NM}$ for two matrices $M$ and $N$.
\end{proof}

\begin{remark}
	{\rm
	Incidentally we remark that the same computation as above yields that the relation between two matrix-Wigner transforms $W_{T_1}, W_{T_2}$ is expressed 
	by the formula 
	$$
	\langle W_{T_1}(\mu,\nu),\phi_1\otimes\phi_2\rangle = 
	|\det T_2| \ \langle W_{T_2}(\mu,\nu),W_{T_1^{-1}T_2}(\phi_1,\widehat{\overline{\phi_2}})\rangle,  
	$$
	where $\mu,\nu\in \mathcal S'(\mathbb R^d), \phi_1,\phi_2\in \mathcal S(\mathbb R^d)$. 
	}
\end{remark}

The following result is essentially contained in \cite{bfgo}. Since statement (ii) does not appear explicitly, we include a proof for the reader's convenience. 
\begin{lemma}\label{tech-lem2}
Suppose that $\mu$ is a tempered distribution of the form
$$
\mu=\sum_{r\in\Lambda}\sum_{|\alpha|\leq N} a_r^\alpha \delta_r^{(\alpha)},
$$
with $\Lambda\subset\R^d$ uniformly discrete, $N\geq 0$ and $a_r^\alpha\in\mathbb{C}$. Then:
\begin{itemize}
\item[(i)] For every  $\phi_1,\phi_2\in\Sch(\R^d)$ we have
\begin{equation*}
\begin{split}
&\langle W(\mu),\phi_1\otimes\phi_2\rangle = \\
&\ = \sum_{|\alpha|\leq N}\sum_{\alpha_1\leq\alpha} \sum_{|\beta|\leq N}\sum_{\beta_1\leq\beta} \lambda_{\alpha,\beta}^{\alpha_1,\beta_1} \sum_{r\in\Lambda}\sum_{s\in\Lambda} a_r^\beta \overline{a_s^\alpha}\overline{\phi_1}^{(\alpha_1+\beta_1)}\left(\frac{r+s}{2}\right) \overline{\widehat{\phi_2}}^{(\alpha-\alpha_1+\beta-\beta_1)}(s-r),
\end{split}
\end{equation*}
where
$$
\lambda_{\alpha,\beta}^{\alpha_1,\beta_1}=\binom{\alpha}{\alpha_1}\binom{\beta}{\beta_1}(-1)^{|\alpha|+|\beta|+|\alpha-\alpha_1|+|\beta-\beta_1|} \left(\frac{1}{2}\right)^{|\alpha_1|+|\beta_1|}.
$$
\item[(ii)] Let $N\geq 1$, $\gamma\in\mathbb{N}^d_0$ with $|\gamma|=2N.$ Then there exists $\varepsilon>0$ and $\psi\in\mathcal{D}(B_\varepsilon)$ such that for every $t\in \mathbb{R}$ with $t\geq 1$, writing $\phi_2(y)=\mathcal{F}^{-1}_{x\to y}\left(\psi(tx)\right)$, we have that for every $\phi_1\in \mathcal S(\mathbb R^d)$: 
	\begin{equation}\label{left-h-s}
		\langle W(\mu), \phi_1\otimes\phi_2\rangle =  t^{2N}\sum_{r\in\Lambda}\left(\sum_{(\alpha,\beta)\in F^d_\gamma}a_r^\beta\overline{a_r^\alpha}\right)\overline{\phi_1}(r),
	\end{equation}
	where
	\begin{equation}\label{Fdgamma}
		F_\gamma^d=\{(\alpha,\beta) : |\alpha|=|\beta|=N,\ \alpha+\beta=\gamma\}.
	\end{equation}
\end{itemize}
\end{lemma}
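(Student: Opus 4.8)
The plan is to prove (i) by transferring the operators in the identity $W(\mu)=\mathcal F_2 P_{T_0}(\mu\otimes\overline\mu)$ onto the test function and expanding with the Leibniz rule, and then to deduce (ii) from (i) through a tailored choice of $\psi$.

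For (i): since $\Lambda$ is uniformly discrete, the tensor product $\mu\otimes\overline\mu=\sum_{r,s\in\Lambda}\sum_{|\alpha|,|\beta|\le N}a_r^\beta\overline{a_s^\alpha}\,\delta_r^{(\beta)}\otimes\overline{\delta_s^{(\alpha)}}$ is a well-defined tempered distribution. Writing $W(\mu)=W_{T_0}(\mu)=\mathcal F_2 P_{T_0}(\mu\otimes\overline\mu)$ and moving $\mathcal F_2$ and $P_{T_0}$ to the test side exactly as in Lemma \ref{tech-lem1} (note $|\det T_0|=1$) gives $\langle W(\mu),\phi_1\otimes\phi_2\rangle=\langle\mu\otimes\overline\mu,\,P_{T_0^{-1}}\mathcal F_2^{-1}(\phi_1\otimes\phi_2)\rangle$. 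Since $T_0^{-1}$ sends $(r,s)$ to $(\tfrac{r+s}{2},\,r-s)$ and the partial inverse Fourier transform splits the tensor product, the right-hand side becomes a sum over $r,s\in\Lambda$ and $|\alpha|,|\beta|\le N$ of $a_r^\beta\overline{a_s^\alpha}$ paired against the $(\beta,\alpha)$-derivative in $(r,s)$ of the function $\overline{\phi_1}\!\left(\tfrac{r+s}{2}\right)\overline{\widehat{\phi_2}}(s-r)$. Each such derivative distributes over the two factors, producing the binomials $\binom{\alpha}{\alpha_1}\binom{\beta}{\beta_1}$, the powers $(1/2)^{|\alpha_1|+|\beta_1|}$ coming from differentiating $\phi_1(\tfrac{r+s}{2})$, and the signs coming from differentiating $\widehat{\phi_2}(s-r)$, from the definition of $\delta^{(\alpha)}$ and from the conjugations; collecting these is exactly the computation of \cite{bfgo} and yields the stated identity with coefficients $\lambda_{\alpha,\beta}^{\alpha_1,\beta_1}$.

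For (ii): let $\delta_0>0$ be a separation constant for $\Lambda$ and fix $\varepsilon\in(0,\delta_0]$. Choose $\chi\in\mathcal D(B_\varepsilon)$ with $\chi\equiv 1$ near $0$ and set $\psi(x):=\tfrac{x^\gamma}{\gamma!}\chi(x)\in\mathcal D(B_\varepsilon)$. Since $\psi$ agrees with the monomial $x^\gamma/\gamma!$ near the origin, $\psi^{(\eta)}(0)=1$ if $\eta=\gamma$ and $\psi^{(\eta)}(0)=0$ for every other multi-index $\eta$ with $|\eta|\le 2N$. With $\phi_2(y)=\mathcal F^{-1}_{x\to y}(\psi(tx))$ we have $\widehat{\phi_2}(\xi)=\psi(t\xi)$, hence $\overline{\widehat{\phi_2}}^{(\eta)}(\xi)=t^{|\eta|}\,\overline{\psi^{(\eta)}}(t\xi)$. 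Plugging this into the formula of (i), the factor $\overline{\widehat{\phi_2}}^{(\alpha-\alpha_1+\beta-\beta_1)}(s-r)=t^{|\alpha-\alpha_1+\beta-\beta_1|}\overline{\psi^{(\alpha-\alpha_1+\beta-\beta_1)}}(t(s-r))$ vanishes unless $|t(s-r)|<\varepsilon$, which (as $t\ge1$, $\varepsilon\le\delta_0$) forces $s=r$; so only the diagonal contributes, and there this factor equals $t^{|\eta|}\,\overline{\psi^{(\eta)}(0)}$ with $\eta=\alpha-\alpha_1+\beta-\beta_1$ and $|\eta|=|\alpha|+|\beta|-|\alpha_1|-|\beta_1|\le 2N$. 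By the choice of $\psi$ this is nonzero only when $\eta=\gamma$; since $|\gamma|=2N$, the equality $\eta=\gamma$ forces $\alpha_1=\beta_1=0$, $|\alpha|=|\beta|=N$ and $\alpha+\beta=\gamma$, i.e.\ $(\alpha,\beta)\in F_\gamma^d$. For such terms $\lambda_{\alpha,\beta}^{0,0}=1$, $\overline{\phi_1}^{(0)}(\tfrac{r+r}{2})=\overline{\phi_1}(r)$, and the power of $t$ is $t^{2N}$, so summing over $(\alpha,\beta)\in F_\gamma^d$ and $r\in\Lambda$ produces exactly \eqref{left-h-s}. (The $r$-sum converges because $\mu\in\Sch'$ forces polynomial growth of the $a_r^\alpha$, $\phi_1\in\Sch$, and $\Lambda$ is u.d.)

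The only delicate point is the design of $\psi$: it must simultaneously have support smaller than the packing radius of $\Lambda$ — so that the double sum collapses onto the diagonal $r=s$ — and have all origin-derivatives of order $\le 2N$ vanishing except the $\gamma$-th one, which is what makes (ii) an exact identity rather than merely a leading-order-in-$t$ asymptotics. The explicit $\psi=\tfrac{x^\gamma}{\gamma!}\chi$ meets both demands at once, and everything else reduces to the Leibniz-rule bookkeeping already recorded in (i).
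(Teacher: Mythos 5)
Your proof is correct and takes essentially the same route as the paper: part (i) is the same direct computation (which the paper simply cites from \cite{bfgo}), and part (ii) uses the identical device of a bump $\psi$ supported inside the packing radius of $\Lambda$ with all origin-derivatives vanishing except the $\gamma$-th, so that the $t$-scaling collapses the double sum to the diagonal and the constraint $\alpha-\alpha_1+\beta-\beta_1=\gamma$ with $|\gamma|=2N$ forces $(\alpha,\beta)\in F_\gamma^d$ and $\alpha_1=\beta_1=0$. Your explicit choice $\psi=\frac{x^\gamma}{\gamma!}\chi$ is just a concrete realization of the $\psi$ whose existence the paper asserts.
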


\begin{proof}
Point (i) is obtained by a direct computation; it is explicitly shown in \cite[page 7]{bfgo}.

Concerning point (ii), we fix $\gamma\in {\mathbb N}_0^d$ with $|\gamma| = 2N\geq 2.$ Since $\Lambda$ is uniformly discrete, there is $\varepsilon > 0$ such that $|r-s|\geq \varepsilon$ for every $r,s\in \Lambda, r\neq s.$ We choose $\psi\in {\mathcal D}\left(B_\varepsilon\right)$ real valued such that $\psi^{(\gamma)}(0) = 1, \psi^{(\alpha)}(0) = 0$ for all $\alpha\neq \gamma.$ For $t\geq 1$ we consider $\phi_2$ such that $\widehat{\phi_2}(x)=\psi(tx).$ Then $\widehat{\phi}_2^{(\gamma)}(0)= t^{2N}$ while $\widehat{\phi}_2^{(\alpha)}(0) = 0$ for any $\alpha\neq \gamma.$ We observe that the conditions
$$
\alpha - \alpha_1 + \beta - \beta_1 = \gamma,\ \ |\alpha|\leq N,\ \  |\beta|\leq N, \alpha_1\leq \alpha,\ \ \beta_1\leq \beta
$$
imply $|\alpha| = |\beta| = N, \alpha_1 = \beta_1 = 0.$ Then from point (i) we get
\begin{equation}\label{phi2-0}
\begin{array}{*2{>{\displaystyle}l}}
	\langle W(\mu), \phi_1\otimes\phi_2\rangle & = \sum_{\begin{array}{c}|\alpha|=|\beta|=N\\ \alpha+\beta = \gamma\end{array}}\sum_{r\in \Lambda}a_r^\beta\overline{a_r^\alpha}\overline{\phi_1}(r)\overline{\widehat{\phi_2}}^{(\gamma)}(0) \\ & \\ & =  t^{2N}\sum_{r\in\Lambda}\left(\sum_{(\alpha,\beta)\in F_\gamma^d}a_r^\beta\overline{a_r^\alpha}\right)\overline{\phi_1}(r),
\end{array}
\end{equation}
where $F_\gamma^d$ is given by \eqref{Fdgamma}. 
\end{proof}

\section{Proof of Theorem \ref{thm-main1}}

In order to prove Theorem \ref{thm-main1} we recall from \cite[Theorem 10]{Bayer} that $W_T$ belongs to the Cohen class if and only if $T$ is of the form
\begin{equation}\label{T-Cohen}
T_{\rm Cohen}=\left(\begin{array}{cc} \Id &E+\frac{1}{2}\Id \\[0.2cm] \Id &E-\frac{1}{2}\Id \end{array}\right)
\end{equation}
for a $d\times d$ matrix $E$. Observe that in this case
\begin{equation}\label{T-Cohen-inv}
T_{\rm Cohen}^{-1}=\left(\begin{array}{cc} \frac{1}{2}\Id-E &\frac{1}{2}\Id+E \\[0.2cm] \Id &-\Id \end{array}\right).
\end{equation}
and conditions (ii) and (iii) of Theorem \ref{thm-main1} are always satisfied.

\begin{proof}[Proof of Theorem \ref{thm-main1}]	
	We first observe that from Lemma \ref{lem:support-wignerT}
	\begin{equation}\label{A+B}
	R=A(\supp\mu)+B(\supp\mu)\supseteq (A+B)(\supp\mu),
	\end{equation}
	which implies that $(A+B)(\supp\mu)$ is uniformly discrete. Then from hypothesis (iii) we have that $\supp\mu$ is uniformly discrete. Observe that, in particular, if $W_T$ belongs to the Cohen class then from \eqref{T-Cohen-inv} we obtain $\supp\mu\subseteq R$. Writing $\Lambda=\supp\mu$, from the structure Theorem for tempered distributions, we have that $\mu$ is of the form
	\begin{equation}\label{mu-expr-1}
		\mu=\sum_{r\in\Lambda}\sum_{|\alpha|\leq N} a_r^\alpha\delta_r^{(\alpha)},
	\end{equation}
	for a uniformly discrete set $\Lambda\subset\R^d$.
	
	We now check that $\mu$ is a measure. We have to prove that $a_r^\alpha=0$ for every $\alpha\neq 0$ and $r\in\Lambda$. Let $\phi_1, \phi_2 \in {\mathcal S}({\mathbb R^d})$; we want to analyze the left and right-hand side of \eqref{rel-W-WT}, for $\Phi=\phi_1\otimes\phi_2$. In order to analyze the left-hand side, from \eqref{mu-expr-1} we can use Lemma \ref{tech-lem2}. We then fix $\gamma\in {\mathbb N}_0^d$ with $|\gamma| = 2N\geq 2.$ For any $t\geq 1$ we take $\phi_2\in\Sch(\R^d)$ as in Lemma \ref{tech-lem2}(ii), obtaining that $\langle W(\mu), \phi_1\otimes\phi_2\rangle$ is given by \eqref{left-h-s}.
	
	Concerning the right-hand side of \eqref{rel-W-WT}, we first observe that, from Lemma \ref{tech-lem2}(ii), $\widecheck{\phi}_2(\cdot)=\tilde{\psi}(t\cdot)$, where $\widecheck{\phi}_2$ denotes the inverse Fourier transform of $\phi_2$ and $\tilde{\psi}(x)=\psi(-x).$ Then, writing $M=T_0^{-1}T$ and choosing $\phi_1$ with compact support, we have
	\begin{equation}
		\begin{array}{*2{>{\displaystyle}l}}
			|\langle W(\mu),\phi_1\otimes\phi_2\rangle| & = |\mbox{det}\ T|\cdot \langle W_T(\mu),\mathcal{F}_2 P_M(\phi_1\otimes\tilde{\psi}(t\cdot))\rangle |\\
			&=|\mbox{det}\ T|\cdot\left| \sum_{(r,s)\in R\times S} c_{r,s} \overline{ \left(\mathcal{F}_2 P_M(\phi_1\otimes \tilde{\psi}(t\cdot))\right)(r,s) } \right|. \label{diff-cases}
	\end{array}\end{equation} 
	
	We observe that 
	$$
	M=\left(\begin{array}{cc} M_{11} &M_{12} \\ M_{21} &M_{22}\end{array}\right) = \left(\begin{array}{cc} \frac{1}{2}(A_0+C_0) &\frac{1}{2}(B_0+D_0) \\ A_0-C_0 &B_0-D_0\end{array}\right). 
	$$ Since $M_{22}=B_0-D_0$ is invertible by hypothesis, making the change $z= y + M_{22}^{-1}M_{21}r$ in the expression 
	$$
	\mathcal F_2 P_M(\phi_1 \otimes \tilde{\psi}(t\cdot))(r,s) = \int_{{\mathbb R}^d}\phi_1(M_{11}r+M_{12}y)\tilde{\psi}(tM_{21}r + tM_{22}y)e^{-2\pi i y s}\ dy
	$$ we have
	\begin{equation*}
		\begin{split}
			&\mathcal F_2 P_M(\phi_1 \otimes \tilde{\psi}(t\cdot))(r,s)= e^{2\pi i \left(M_{22}^{-1}M_{21}r\right)s} \\
			&\quad\times\int_{{\mathbb R}^d} \phi_1((M_{11}-M_{12}M_{22}^{-1}M_{21})r+M_{12}z) \tilde{\psi}(tM_{22}z)e^{-2\pi i zs}dz.
		\end{split}
	\end{equation*} Hence 
	$$
	\left|\mathcal F_2 P_M(\phi_1 \otimes \tilde{\psi}(t\cdot))(r,s)\right| = \left|\widehat{g_{r,t}}(s)\right|$$ where
	$$
	g_{r,t}(z) = \phi_1((M_{11}-M_{12}M_{22}^{-1}M_{21})r+M_{12}z) \tilde{\psi}(tM_{22}z).$$ A necessary condition for $g_{r,t}$ not to be identically null is the existence of some $u = M_{22}z\in \supp \tilde{\psi}\subset B_\varepsilon$ such that $$(M_{11}-M_{12}M_{22}^{-1}M_{21})r+M_{12}z\in \supp \phi_1,$$ hence 
	\begin{equation}\label{nonnull}
	(M_{11}-M_{12}M_{22}^{-1}M_{21})r\in M_{12}M_{22}^{-1}\left(B_\varepsilon\right) + \supp \phi_1.
	\end{equation} 
	Moreover, from \cite[Theorem 2.1]{Lu_Shiou} we have that $M_{11}-M_{12}M_{22}^{-1}M_{21}$ is invertible, which implies that 
	$$
	\left(M_{11}-M_{12}M_{22}^{-1}M_{21}\right)(R)$$ is uniformly discrete. 
	It follows that condition \eqref{nonnull} can be satisfied only by a finite number of $r\in R$.
	Consequently there is a finite subset $F\subset R$ (depending on $\phi_1$) such that $g_{r,t} = 0$ whenever $r\notin F,$ $t \geq 1.$ From (\ref{diff-cases}) we conclude
	\begin{equation}\label{estimate W}
	|\langle W(\mu),\phi_1\otimes\phi_2\rangle| \leq |\mbox{det}\ T|\cdot \displaystyle\sum_{r\in F} \sup_{s\in S}|c_{rs}|\sum_{s\in S}\left|\widehat{g_{r,t}}(s)\right|.
	\end{equation} 
	We check now that, for every fixed $r\in R,$ the series in the right hand side is convergent and 
	$$
	\sum_{s\in S}\left|\widehat{g_{r,t}}(s)\right| \leq Ct\ \ \forall t\geq 1$$ for some constant $C > 0$ depending only on $\phi_1$ and $\psi.$ We denote 
	$$
	f(z):=\phi_1((M_{11}-M_{12}M_{22}^{-1}M_{21})r+M_{12}z)\theta(z),\ \ h(z) := \tilde{\psi}(M_{22}z),$$ where $\theta$ is a compactly supported smooth function equal to $1$ on $M_{22}^{-1}(B_\varepsilon)$, which contains $\supp h(tz)$ for $t\ge 1$.
	Then 
	$$
	g_{r,t}(z) = f(z) h(tz)$$ (recall that $r\in F$ is fixed) and both functions $f$ and $h$ are in ${\mathcal D}({\mathbb R}^d).$  In what follows $C$ is a constant which is not necessarily the same at each appearance. We have
	$$
	\begin{array}{*2{>{\displaystyle}l}}
		\sum_{s\in S} \left|\widehat{g_{r,t}}(s)\right| & = \sum_{s\in S}\left|\left(\widehat{f}\ast\widehat{h(t\cdot)}\right)(s)\right| \leq \sum_{s\in S}\int_{{\mathbb R}^d} |\widehat{f}(u)| |\widehat{h(t\cdot)}(s-u)|\ du\\ & \\ & = \frac{1}{t^d}\sum_{s\in S}\int_{{\mathbb R}^d}|\widehat{f}(u)| |\widehat{h}(\frac{s-u}{t})|\ du \leq  \frac{C}{t^d}\sum_{s\in S} \int_{{\mathbb R}^d}|\widehat{f}(u)| \big(\frac{1}{1 + \frac{|s-u|}{t}}\big)^{d+1}\ du \\ & \\ & = C t\sum_{s\in S}\int_{{\mathbb R}^d}|\widehat{f}(u)|\big(\frac{1}{t+|s-u|}\big)^{d+1}\ du \\ & \\ & \leq C t\sum_{s\in S}\int_{{\mathbb R}^d}\big(\frac{1}{1+|u|}\big)^{d+1}\big(\frac{1}{1+|s-u|}\big)^{d+1}\ du \\ & \\ & = C t\sum_{s\in S}\int_{{\mathbb R}^d}\Phi(u)\Phi(s-u)\ du, 	
	\end{array} $$ where $\Phi(x) = \big(\frac{1}{1+|x|}\big)^{d+1}.$ With a standard procedure we obtain 
	$$
	\begin{array}{*2{>{\displaystyle}l}}
		\int_{{\mathbb R}^d}\Phi(u)\Phi(s-u)\ du & = \int_{|u|\geq \frac{|s|}{2}}\Phi(u)\Phi(s-u)\ du + \int_{|u|\leq \frac{|s|}{2}}\Phi(u)\Phi(s-u)\ du\\ & \\ & \leq 2\|\Phi\|_1 \big(\frac{2}{2+|s|}\big)^{d+1}.
	\end{array}$$

Since $S$ is uniformly discrete, we can select $\varepsilon>0$ with the property that all balls $B^s_\varepsilon$ with radius $\varepsilon$ and center $s\in S$ are disjoint.
For sufficiently large $N\in\mathbb N$, each ball $B^s_\varepsilon$, with $s\in S$, 
contains a point $\alpha\in \frac{1}{N}\mathbb Z^d$ such that $|\alpha|\le |s|$.  
Then 
$$
\sum_{s\in S}|\widehat{g_{r,t}}(s)|\le Ct\sum_{s\in S}\left( \frac{2}{2+|s|}\right)^{d+1} \le Ct \sum_{\alpha\in \frac{1}{N}\mathbb Z^d}\left( \frac{2}{2+|\alpha|}\right)^{d+1} =Ct.
$$

%all of them have de same Lebesgue measure $\mu$ and $s\in B^s$ satisfies $|s|\geq |x|$ for every $x\in B^s.$ Finally 
%	$$
%	\begin{array}{*2{>{\displaystyle}l}}
%		\sum_{s\in S^\ast} \left|\widehat{g_{r,t}}(s)\right| & \leq  \frac{Ct}{\mu}	\sum_{s\in S^\ast}\int_{B^s}\big(\frac{2}{2+|x|}\big)^{d+1}\ dx \\ & \\ & \leq Ct %\int_{{\mathbb R}^d}\big(\frac{2}{2+|x|}\big)^{d+1}\ dx \leq C t.
%	\end{array}$$ 
From estimate \eqref{estimate W} we then have 
	\begin{equation}\label{right-h-s-th1}
		|\langle W(\mu),\phi_1\otimes\phi_2\rangle| \leq C t \displaystyle\sum_{r\in F} \sup_{s\in S}|c_{rs}|,\end{equation} where $F\subset R$ is a finite set depending on 
    $\phi_1$ and the constant $C$ is independent on $t\geq 1.$

	Then from \eqref{left-h-s} and \eqref{right-h-s-th1} we obtain that for some constant $C > 0$ and every $t\geq 1$
	$$
	t^{2N}\left|\sum_{r\in\Lambda}\left(\sum_{(\alpha,\beta)\in F^d_\gamma}a_r^\beta\overline{a_r^\alpha}\right)\overline{\phi_1}(r)\right|\leq C t;
	$$
	taking limits as $t\to \infty$ we conclude 
	$$
	\sum_{r\in\Lambda}\left(\sum_{(\alpha,\beta)\in F^d_\gamma}a_r^\beta\overline{a_r^\alpha}\right)\overline{\phi_1}(r) = 0
	$$
	for every compactly supported $\phi_1\in {\mathcal S}({\mathbb R}^d)$. Since $\Lambda$ is discrete we deduce 
	$$
	\sum_{(\alpha,\beta)\in F^d_\gamma}a_r^\beta\overline{a_r^\alpha} = 0\ \ \forall r\in \Lambda.$$ Hence, by \cite[Lemma 5]{bfgo}, $a_r^\alpha = 0$ for any $r\in\Lambda, |\alpha|=N.$ Proceeding by recurrence we finally obtain that $\mu$ is a measure.
	
	In order to prove the result on $\widehat{\mu}$ we recall from \cite{Bayer} that
	\begin{equation}\label{WT-Fourier}
		W_T(\mu)(x,\omega)=|\det T|^{-1} W_L(\widehat{\mu})(\omega,-x)
	\end{equation}
	with
	\begin{equation}\label{WT-Fourier-matr}
		L=\left(\begin{array}{cc}\Id &0 \\ 0 &-\Id\end{array}\right) (T^{-1})^{\rm t} \left( \begin{array}{cc} 0 &\Id \\ \Id &0 \end{array}\right).
	\end{equation}
	We then have
	$$
	L=\left(\begin{array}{cc}\Id &0 \\ 0 &-\Id\end{array}\right) 
	\left(\begin{array}{cc} A^{\rm t} &C^{\rm t} \\ B^{\rm t} &D^{\rm t}\end{array}\right)
	\left( \begin{array}{cc} 0 &\Id \\ \Id &0 \end{array}\right) =
	\left(\begin{array}{cc} C^{\rm t} &A^{\rm t} \\ -D^{\rm t} &-B^{\rm t}\end{array}\right),
	$$
	and moreover
	$$
	L^{-1}=\left(\begin{array}{cc}0 &\Id \\ \Id &0\end{array}\right) 
	\left(\begin{array}{cc} A_0^{\rm t} &C_0^{\rm t} \\ B_0^{\rm t} &D_0^{\rm t}\end{array}\right)
	\left( \begin{array}{cc} \Id &0 \\ 0 &-\Id \end{array}\right) =
	\left(\begin{array}{cc} B_0^{\rm t} &-D_0^{\rm t} \\ A_0^{\rm t} &-C_0^{\rm t}\end{array}\right).
	$$
	Then it is easy to see that $L$ satisfies hypotheses (ii) and (iii); moreover, from \eqref{WT-Fourier} we have that
	$$
	W_L(\widehat{\mu})(x,\omega)=|\det T|\sum_{(s,r)\in S\times (-R)} c'_{s,r}\delta_{(s,r)}
	$$
	with $c'_{s,r}=c_{-r,s}.$ Then $W_L(\widehat{\mu})$ satisfies hypothesis (i), and so, repeating the same proof for $W_L(\widehat{\mu})$ we obtain that also $\widehat{\mu}$ is a measure supported in a uniformly discrete set. Finally, from \eqref{T-Cohen} and \eqref{WT-Fourier-matr} we have that if $W_T$ belongs to the Cohen class, then also $W_L$ is in the Cohen class; then, proceeding as before, if $W_T$ belongs to the Cohen class we have that $\supp\widehat{\mu}\subseteq S$.
\end{proof}
\begin{remark}{\rm Condition (ii) in Theorem 1 is necessary for the function $h$ that appears in 
		the proof of the theorem to be in the Schwartz class. It therefore plays a key role in 
		proving that $\mu$ is a measure. On the other hand, requiring that $T$ satisfies condition (iii) is needed to prove that $\supp \mu$ is uniformly discrete and it is equivalent to requiring that $L$ satisfies condition (ii).}
\end{remark}

%%%%%%%%%%%%%%%%%%%%%%%%%%%%%%%%%%%%%%%%%%%%%%%}%%%%%%%%%%%%%%%%%%%%%%%%%%%%%%%%%

A well-known form of the uncertainty principle states that the Wigner transform $W(\mu)$ of a non zero distribution $\mu\in \mathcal S'(\mathbb R^d)$ can not have compact support. As a by-product of the previous proof we can now easily show that this can be generalised to matrix-Wigner transforms under the hypothesis of Theorem \ref{thm-main1}, and strengthened in the sense that neither of the projections of $W_T\mu$ can be bounded if $\mu\neq 0$.      
\begin{proposition}
	In the hypothesis of Theorem \ref{thm-main1} if $R$ or $S$ is finite, then $\mu=0$.
\end{proposition}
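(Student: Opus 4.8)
The plan is to reduce everything to the observation that a nonzero measure cannot be both compactly supported and have uniformly discrete spectrum, and symmetrically with the roles of $\mu$ and $\widehat\mu$ exchanged. By Theorem \ref{thm-main1} we already know that $\mu$ and $\widehat\mu$ are measures with uniformly discrete supports, which I denote $\Lambda$ and $\Sigma$; it therefore suffices to show that $R$ finite forces $\Lambda$ finite, and $S$ finite forces $\Sigma$ finite.

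Suppose first $R$ is finite. Since $\Pi_1(\supp W_T(\mu))\subseteq R$ is discrete, Lemma \ref{lem:support-wignerT} applies and \eqref{Pisupp} gives $A(\Lambda)+B(\Lambda)=\Pi_1(\supp W_T(\mu))\subseteq R$, so $A(\Lambda)+B(\Lambda)$ is finite; as $(A+B)(\Lambda)\subseteq A(\Lambda)+B(\Lambda)$ and $\det(A+B)\ne 0$ by hypothesis (iii), $\Lambda$ is finite. Suppose instead $S$ is finite. Here I would invoke the passage to the Fourier side already carried out in the proof of Theorem \ref{thm-main1}: with $L$ given by \eqref{WT-Fourier-matr} one has $W_L(\widehat\mu)=|\det T|\sum_{(s,r)\in S\times(-R)}c'_{s,r}\delta_{(s,r)}$, hence $\Pi_1(\supp W_L(\widehat\mu))\subseteq S$ is finite, and $L$ again satisfies conditions (ii) and (iii) (indeed, from the expression for $L^{-1}$ obtained there, the relevant block sum equals $(B_0-D_0)^{\rm t}$, invertible by (ii)); so the same argument applied to $\widehat\mu$ and $L$ shows that $\Sigma$ is finite.

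To conclude, assume $\mu\ne 0$ and derive a contradiction. If $\Lambda$ is finite, then $\mu=\sum_{r\in\Lambda}a_r\delta_r$ with not all $a_r=0$, so $\widehat\mu$ is the nonzero trigonometric polynomial $\omega\mapsto\sum_{r\in\Lambda}a_r e^{-2\pi i r\omega}$; being a nonzero real-analytic function on $\mathbb R^d$ (the restriction of an entire function), its zero set has empty interior, so $\supp\widehat\mu=\mathbb R^d$, contradicting the uniform discreteness of $\supp\widehat\mu=\Sigma$. The case $\Sigma$ finite is symmetric: by Fourier inversion $\mu$ is then itself a nonzero trigonometric polynomial, so $\supp\mu=\mathbb R^d$, contradicting that $\Lambda$ is uniformly discrete. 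In either case $\mu=0$.

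Since the substantive work is already contained in Theorem \ref{thm-main1}, the only points needing attention are the bookkeeping that the Fourier-side matrix $L$ retains the invertibility hypotheses (ii) and (iii) used on the time side, and the elementary fact that a nonzero finite exponential sum cannot vanish on a nonempty open subset of $\mathbb R^d$; neither of these is a genuine obstacle.
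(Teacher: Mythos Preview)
Your proof is correct and follows essentially the same route as the paper's: from the inclusion $(A+B)(\Lambda)\subseteq R$ (respectively $(B_0-D_0)^{\rm t}(\Sigma)\subseteq S$ via the matrix $L$) you deduce that the support on one side is finite, then use analyticity of the Fourier transform of a compactly supported measure to contradict uniform discreteness on the other side. The only cosmetic difference is that you invoke Theorem \ref{thm-main1} up front to secure that $\Sigma$ is uniformly discrete, whereas the paper re-derives this directly from $S\supseteq (B_0-D_0)^{\rm t}(\Sigma)$ with $S$ u.d.; and you phrase the analyticity step as ``nonzero trigonometric polynomial has full support'' rather than citing Paley--Wiener.
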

\begin{proof}
	Let $L$ be the matrix defined in \eqref{WT-Fourier-matr}, as remarked in the final part of the proof of Theorem \ref{thm-main1}, both matrices $T$ and $L$ satisfy conditions (ii) and (iii) of the hypothesis of Theorem \ref{thm-main1}, so that we have 
	\begin{equation}
		\begin{array}{l}
			R = \Pi_1\left(\supp W_T(\mu)\right) = A(\supp\mu) + B(\supp\mu)\supseteq (A+B)(\supp\mu), \\
			S = \Pi_1\left(\supp W_L(\widehat\mu)\right) = B_0^t(\supp\widehat\mu) + (-D_0^t)(\supp\widehat\mu)\supseteq (B_0-D_0)^t(\supp\widehat\mu)
		\end{array}
	\end{equation}
	It follows that if $R$ is finite then $(A+B)(\supp\mu)$ is finite, which implies $\supp\mu$ is finite, as $A+B$ is a bijection. By the Paley-Wiener theorem, $\widehat \mu$ is then an analytic function. On the other hand $S$ is u.d. and $S \supseteq (B_0-D_0)^t(\supp\widehat\mu)$ implies that also $\supp\widehat \mu$ is u.d., as $(B_0-D_0)^t$ is a linear bijection. This is impossible unless $\mu=0$.
	
	Similarly if $S$ is finite, then $\supp\widehat\mu$ is finite and $\mu$ is an analytic function. The inclusion $R \supseteq (A+B)(\supp\mu)$, with $R$ u.d., 
	 implies that $\supp \mu$ is u.d., which is impossible unless $\mu=0$.
\end{proof}	

%%%%%%%%%%%%%%%%%%%%%%%%%%%%%%%%%%%%%%%%%%%%%%%%%%%%%%%%%%%%%%%%%%%%%%%%%%%%%%%%%%%%

\section{Proof of Theorem \ref{thm-main2}}

%Now we want to prove Theorem \ref{thm-main2}. 

Before we proceed with the proof of our last result we need the following observation on block matrices.

\begin{lemma}\label{block-matr} 
Let $Z$ be an invertible $2d\times 2d$ matrix, and write
$$
Z=\left(
\begin{array}{cc}
	Y & U \\
	V &W
\end{array}
\right), \qquad Z^{-1}=\left(
\begin{array}{cc}
	E & F \\
	G &H
\end{array}
\right),
$$
where $Y,U,V,W,E,F,G,H$ are $d\times d$ matrices. Then

{\rm (a)} The following equivalences hold 
\begin{equation}\label{BlockMat1}
	\begin{split}
		\det Y\ne0 &\Longleftrightarrow \det H\ne0,\\
		\det W\ne0 &\Longleftrightarrow \det E\ne0.
	\end{split}
\end{equation}
\begin{equation}\label{BlockMat2}
	\begin{split}
		\det U\ne0 &\Longleftrightarrow \det F\ne0,\\
		\det V\ne0 &\Longleftrightarrow \det G\ne0.
	\end{split}
\end{equation}

{\rm (b)} In the case $\det Y\ne0$, or equivalently $\det H\ne0$, we have:
	\begin{equation}\label{BlockMat3}
	U=-Y \Longleftrightarrow F=H.
	\end{equation}
(Note that statements similar to \eqref{BlockMat3} could be deduced for other couples of sub-matrices of $Z$ and $Z^{-1}$, but this is the only one that we shall need.)
\end{lemma}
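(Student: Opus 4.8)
The plan is to read everything off from the block forms of the two identities $ZZ^{-1}=I$ and $Z^{-1}Z=I$, after isolating one elementary fact and applying it repeatedly. \emph{Building block:} if $M=\left(\begin{smallmatrix}a&b\\ c&d\end{smallmatrix}\right)$ is invertible with $M^{-1}=\left(\begin{smallmatrix}e&f\\ g&h\end{smallmatrix}\right)$ (all blocks $d\times d$) and $\det h\ne0$, then $\det a\ne0$. Indeed, the lower-left block of $M^{-1}M=I$ reads $ga+hc=0$; if $ax=0$ for some $x\in\R^d$, multiplying this relation on the right by $x$ gives $h(cx)=0$, hence $cx=0$ since $h$ is invertible, hence $M\left(\begin{smallmatrix}x\\ 0\end{smallmatrix}\right)=0$, so $x=0$ by invertibility of $M$; thus $a$ is invertible.

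For part (a), let $J=\left(\begin{smallmatrix}0&\Id\\ \Id&0\end{smallmatrix}\right)$, so that $J^2=\Id$ and conjugation by $J$ interchanges the two block rows and the two block columns of a $2d\times 2d$ matrix. Applying the building block successively to $Z$, to $Z^{-1}$, to $JZJ$ and to $JZ^{-1}J$ — whose inverses are $Z^{-1}$, $Z$, $JZ^{-1}J$ and $JZJ$ respectively — one obtains the four implications $\det H\ne0\Rightarrow\det Y\ne0$, $\det W\ne0\Rightarrow\det E\ne0$, $\det E\ne0\Rightarrow\det W\ne0$, $\det Y\ne0\Rightarrow\det H\ne0$, which together give \eqref{BlockMat1}. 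Running the very same four applications with $ZJ$ in place of $Z$ (note that $ZJ$ has upper-left block $U$ and lower-right block $V$, while its inverse $JZ^{-1}$ has lower-right block $F$ and upper-left block $G$) yields \eqref{BlockMat2}.

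For part (b), assume $\det Y\ne0$, equivalently — by part (a) — $\det H\ne0$. The relevant relation is now the upper-right block of $ZZ^{-1}=I$, namely $YF+UH=0$. If $U=-Y$ this becomes $Y(F-H)=0$, whence $F=H$ because $Y$ is invertible; conversely, if $F=H$ the relation reads $(Y+U)H=0$, whence $U=-Y$ because $H$ is invertible. This proves \eqref{BlockMat3}.

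No serious obstacle is expected, as the statement is pure linear algebra; the only point requiring a little care is the bookkeeping in part (a) — making sure each equivalence is obtained in both directions while only ever assuming invertibility of the block appearing in the hypothesis, which is exactly why the converse implications are extracted by feeding $Z^{-1}$, $JZJ$, $JZ^{-1}J$ (and their analogues built from $ZJ$) into the building block rather than by a direct computation. Alternatively, part (a) follows in one stroke from Jacobi's identity relating a minor of $Z^{-1}$ to the complementary minor of $Z$, since each of $\det E,\det F,\det G,\det H$ then equals, up to sign and the factor $1/\det Z$, one of $\det W,\det U,\det V,\det Y$; I would keep the elementary argument above as the main proof and mention this only as a remark.
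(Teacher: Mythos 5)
Your proof is correct and follows essentially the same route as the paper: one key implication between a block of $Z$ and the complementary block of $Z^{-1}$, propagated to all four equivalences by swapping block rows/columns with $J$ and by exchanging the roles of $Z$ and $Z^{-1}$, and then the identity $YF+UH=0$ for part (b), treated exactly as in the paper. The only difference is that you prove the building block $\det h\neq 0 \Rightarrow \det a \neq 0$ directly by a (correct) kernel argument, whereas the paper cites it from Lu--Shiou; your version has the small advantage of being self-contained.
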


\begin{proof}
(a) From \cite[Theorem
2.1]{Lu_Shiou} we have that
\begin{equation}\label{BlockMat1}
	\begin{split}
		\det Y\ne0 &\Longrightarrow \det H\ne0,\\
		\det W\ne0 &\Longrightarrow \det E\ne0.
	\end{split}
\end{equation}
Since
$$
Z_1=\left(
\begin{array}{cc}
	U & Y \\
	W &V
\end{array}
\right) \quad\Longrightarrow\quad Z_1^{-1}=\left(
\begin{array}{cc}
	G &H \\
	E &F
\end{array}
\right),
$$
applying \eqref{BlockMat1} to $Z_1$ we also have
\begin{equation}\label{BlockMat2}
	\begin{split}
		\det U\ne0 &\Longrightarrow \det F\ne0,\\
		\det V\ne0 &\Longrightarrow \det G\ne0.
	\end{split}
\end{equation}
Finally, from the trivial fact $(Z^{-1})^{-1}=Z$, we have that the implications in \eqref{BlockMat1} and \eqref{BlockMat2} are actually biimplications.

(b) From $ZZ^{-1}=\Id$ we have $YF+UH=0$. Then $U=-Y$ implies $F=H$ as $Y$ is invertible, and $F=H$ implies $U=-Y$ as $F$ is invertible.
\end{proof}

\begin{proof}[Proof of Theorem \ref{thm-main2}]
We first observe that the fact that $T$ is invertible, together with the particular form of $T$, implies that the submatrix $B_0$ is invertible. Now, by Lemma \ref{block-matr} the hypothesis 
\begin{equation}\label{T}
T=\left(\begin{array}{cc} A_0 &B_0 \\ C_0 &B_0 \end{array}\right),\qquad \det B_0\ne 0 
\end{equation}
on the invertible matrix $T$ is equivalent to 
\begin{equation}\label{T_0}
T^{-1}=\left(\begin{array}{cc} A &-A \\ C &D \end{array}\right),\qquad \det A \ne0.
\end{equation}

We can then apply Lemma \ref{lem:support-wignerT}\ (ii) which shows that $\supp\mu$ is uniformly discrete. 
Then, writing $\Lambda=\supp\mu$ we have that $\mu$ is of the form
\begin{equation}\label{mu-expression}
\mu=\sum_{r\in\Lambda}\sum_{|\alpha|\leq N} a_r^\alpha\delta_r^{(\alpha)},
\end{equation}
for a uniformly discrete set $\Lambda\subset\R^d$.

We want to prove that $\mu$ is a measure, i.e., $a_r^\alpha=0$ for every $\alpha\neq 0$ and $r\in\Lambda$. We proceed similarly as in the proof of Theorem \ref{thm-main1}; for $\phi_1, \phi_2 \in {\mathcal S}({\mathbb R^d})$ we analyze the left and right-hand side of \eqref{rel-W-WT}, for $\Phi=\phi_1\otimes\phi_2$. For the left-hand side we use Lemma \ref{tech-lem2}; exactly as in the proof of Theorem \ref{thm-main1} we fix $\gamma\in {\mathbb N}_0^d$ with $|\gamma| = 2N\geq 1$ and $\phi_2\in\Sch(\R^d)$ as in Lemma \ref{tech-lem2}(ii), obtaining that $\langle W(\mu), \phi_1\otimes\phi_2\rangle$ is given by \eqref{left-h-s}.

Concerning the right-hand side of \eqref{rel-W-WT}, we proceed in a different way with respect to the proof of Theorem \ref{thm-main1}. We first observe that, by \eqref{T0-matrix},
$$
T_0^{-1}=\left(\begin{array}{cc} \frac{1}{2}\Id &\frac{1}{2}\Id \\[0.2cm] \Id &-\Id \end{array}\right),
$$
and from the particular form of $T$ we get
$$
T_0^{-1}T=\left(\begin{array}{cc} \frac{1}{2}(A_0+C_0) &B_0 \\[0.2cm] A_0-C_0 &0 \end{array}\right) =: \left(\begin{array}{cc} A_1 &B_1 \\ C_1 &0\end{array}\right).
$$
We then compute
\begin{equation}\label{estimate-WT}
\begin{split}
\mathcal{F}_2 P_{T_0^{-1}T} (\phi_1\otimes\widecheck{\phi}_2)(x,\omega) &=\mathcal{F}_{t\to\omega} \left[ \phi_1(A_1 x+B_1t) \widecheck{\phi}_2(C_1x)\right] \\
&=\frac{1}{|\det B_1|} e^{2\pi i (B_1^{-1}A_1 x)\omega}\widehat{\phi_1}\left((B_1^{-1})^{\rm t}\omega\right) \widecheck{\phi}_2(C_1x),
\end{split}
\end{equation}
where $\widecheck{\phi}_2$ indicates the inverse Fourier transform of $\phi_2$, and $\det B_1\neq 0$ since $B_1=B_0$. Then, since $\widecheck{\phi}_2$ has compact support and is of the form $\widecheck{\phi}_2(x)=\psi(-tx)$, choosing also $\widehat{\phi_1}$ with compact support and using the fact that $W_T(\mu)$ is locally of order $0$ we obtain from \eqref{estimate-WT}
\begin{equation}\label{right-h-s}
\left|\langle W_T(\mu),\mathcal{F}_2 P_{T_0^{-1}T}(\phi_1\otimes\widecheck{\phi}_2)\rangle\right|\leq C_1 \|\widehat{\phi_1}\left((B_1^{-1})^{\rm t}\omega\right) \psi(-tC_1x)\|_\infty:=C,
\end{equation}
where $C$ is a constant independent of $t\geq 1$. Then, from Lemma \ref{tech-lem1}, \eqref{left-h-s} and \eqref{right-h-s} we obtain that for every $t\geq 1$
$$
t^{2N}\left|\sum_{r\in\Lambda}\left(\sum_{(\alpha,\beta)\in F^d_\gamma}a_r^\beta\overline{a_r^\alpha}\right)\overline{\phi_1}(r)\right|\leq C |\det T|.
$$
This cannot be satisfied for every $t\geq 1$ unless
$$
\sum_{r\in\Lambda}\left(\sum_{(\alpha,\beta)\in F^d_\gamma}a_r^\beta\overline{a_r^\alpha}\right)\overline{\phi_1}(r) = 0
$$
for every $\phi_1\in {\mathcal S}({\mathbb R}^d)$ such that $\widehat{\phi}_1$ is compactly supported. Then $$\nu:=\displaystyle\sum_{r\in\Lambda}\left(\sum_{(\alpha,\beta)\in F^d_\gamma}a_r^\beta\overline{a_r^\alpha}\right)\delta_r\in {\mathcal S}'({\mathbb R}^d)$$ is a tempered distribution which vanishes on smooth functions whose Fourier transform is compactly supported. Consequently, by density, $\nu$ vanishes on every function in ${\mathcal S}({\mathbb R}^d).$ Then we can proceed as in the proof of Theorem \ref{thm-main1} deducing that, since $\Lambda$ is discrete,
$$
\sum_{(\alpha,\beta)\in F^d_\gamma}a_r^\beta\overline{a_r^\alpha} = 0\ \ \forall r\in \Lambda.$$ Hence $a_r^\alpha = 0$ for any $r\in\Lambda, |\alpha|=N$ (\cite[Lemma 5]{bfgo}); proceeding by recurrence we finally obtain that $\mu$ is a measure.

In order to prove that $\widehat{\mu}$ is a measure with uniformly discrete support we use \eqref{WT-Fourier}, \eqref{WT-Fourier-matr}, and we observe that
%$$
%W_T(\mu)(x,\omega)=|\det T|^{-1} W_L(\widehat{\mu})(\omega,-x)
%$$
%with
%$$
%L=\left(\begin{array}{cc}\Id &0 \\ 0 &\Id\end{array}\right) (T^{-1})^{\rm t} \left( \begin{array}{cc} 0 &\Id 
%\\ \Id &0 \end{array}\right).
%$$
from \eqref{T} we get
$$
L=\left(\begin{array}{cc}\Id &0 \\ 0 &-\Id\end{array}\right) 
\left(\begin{array}{cc} A^{\rm t} &C^{\rm t} \\ -A^{\rm t} &D^{\rm t}\end{array}\right)
\left( \begin{array}{cc} 0 &\Id \\ \Id &0 \end{array}\right) =
\left(\begin{array}{cc} C^{\rm t} &A^{\rm t} \\ -D^{\rm t} &A^{\rm t}\end{array}\right).
$$
Then $L$ has the same form as $T$, and so, repeating the same proof with $W_L(\widehat{\mu})$ we obtain that also $\widehat{\mu}$ is a measure supported in a uniformly discrete set $\Sigma$.

In the case that $S$ is uniformly discrete, also $\Sigma-\Sigma$ is u.d. and we can apply \cite[Theorem 3]{olevskii2} to obtain (\ref{eq:main-ambiguity}). If instead $R$ is u.d. then we apply \cite[Theorem 3]{olevskii2} to $\widehat{\mu}$ and then use Poisson's summation formula to obtain (\ref{eq:main-ambiguity}).
\end{proof}

We end by some final remarks on the two main results we have proved in this paper.

\begin{remark}{\rm 
\begin{itemize}
\item[(a)] Concerning Theorem \ref{thm-main1}, we know from \cite{bfgo} that for $d=1$, as well as for the Wigner in arbitrary dimension, it holds without extra conditions on the coefficients; the same proof of \cite{bfgo} can be repeated with no conditions on the coefficients for the $\tau-$Wigner in arbitrary dimension (see \cite{BDO2010} for the definition and main properties of the $\tau-$Wigner).
\item[(b)] We have already observed that the conditions on $T$ of Theorems \ref{thm-main1} and \ref{thm-main2} are disjoint; the matrix-Wigner representations in the Cohen class are all included in Theorem \ref{thm-main1}, see \eqref{T-Cohen}-\eqref{T-Cohen-inv}, while the Ambiguity function, obtained for
$$
T=\left(\begin{array}{cc}\frac{1}{2}\Id &\Id \\[0.2cm] -\frac{1}{2}\Id &\Id\end{array}\right),
$$
is included in Theorem \ref{thm-main2}.
\end{itemize}}
\end{remark}


\begin{thebibliography}{99}
	
	\bibitem{Bayer} D. Bayer, E. Cordero, K.~Gr\"{o}chenig, I. Trapasso;
	{\em Linear Perturbations of the Wigner Transform and the Weyl
		Quantization} Advances in Microlocal and Time-Frequency Analysis, (2020),
	79-120.
	
	\bibitem{BDO2010} P. Boggiatto, G. De Donno, A. Oliaro; {\em Time-frequency representations of Wigner type and pseudo-differential operators.} Trans. Amer. Math. Soc. {\bf 362} (2010), no. 9, 4955--4981.
	
	\bibitem{bfgo} P. Boggiatto; C. Fernández, A. Galbis, A. Oliaro; {\em Wigner transform and quasicrystals.} J. Funct. Anal. {\bf 282} (2022), no. 6, Paper No. 109374, 20 pp.

	\bibitem{Cohen66} L. Cohen; {\em Generalized phase-space distribution functions.} J. Mathematical Phys. {\bf 7} (1966), 781--786.

	\bibitem{Cook67} C.E. Cook, M. Bernfeld; {\em Radar signals-An Introduction to Theory and Applications.} Academic Press, New York, 1967.

	\bibitem{favorov} S. Yu Favorov; {\em Fourier quasicrystals and Lagarias' conjecture.} Proc. Amer. Math. Soc. {\bf 144} (2016), no. 8, 3527--3536.	
	
	\bibitem{favorov2} S. Yu Favorov; {\em Fourier quasicrystals and distributions on Euclidean spaces with spectrum of bounded density.} Anal. Math. {\bf 49} (2023), no. 3, 747--764.
	
	\bibitem{favorov3} S. Yu Favorov; {\em Uniqueness theorems for Fourier quasicrystals and temperate distributions with discrete support.} Proc. Amer. Math. Soc. {\bf 149} (2021), no. 10, 4431--4440.

	\bibitem{Gro01} K. Gr\"ochenig; {\em Foundations of time-frequency analysis.} Appl. Numer. Harmon. Anal. Birkh\"auser Boston, Inc., Boston, MA, 2001.

	\bibitem{Jam99} P. Jaming; {\em Phase retrieval techniques for radar ambiguity problems.} J. Fourier Anal. Appl. {\bf 5} (1999), no. 4, 309--329.

	\bibitem{Jans97} A.J.E.M. Janssen; {\em Positivity and spread of bilinear time-frequency distributions.} In: The Wigner distributions. Elsevier, Amsterdam (1997), 1--58.
	
	\bibitem{lagarias} J.C. Lagarias; {\em Mathematical quasicrystals and the problem of diﬀraction,} in: Directions in Mathematical Quasicrystals, M. Baake and R. Moody, Eds., CRM Monograph Ser., vol. 13, Amer. Math. Soc. (Providence, RI, 2000), pp. 61--93.
	
	\bibitem{olevskii2} N. Lev, A. Olevskii; {\em Measures with uniformly discrete support and spectrum.} C. R. Math. Acad. Sci. Paris {\bf 351} (2013), 599--603.
	
	\bibitem{olevskii1}
	N. Lev, A. Olevskii; {\em Quasicrystals and Poisson's summation formula.} Invent. Math. {\bf 200} (2015), no. 2, 585--606.
	
	\bibitem{Lev-Reti}
	N. Lev, G. Reti; {\em Crystalline temperate distributions with uniformly discrete support and spectrum.} J. Funct. Anal. {\bf 281} (2021), no. 4, Paper no. 109072, 15 pp.
	
	\bibitem{Lu_Shiou}
	T. Lu, S. Shiuo; {\em Inverses of 2$\times$2 Block Matrices.} Computers and Mathematics with Applications {\bf 43} (2002), 119--129.
	
	\bibitem{meyer} Y. Meyer; {\em Quasicrystals, almost periodic patterns, mean-periodic functions and irregular sampling.} Afr. Diaspora J. Math. {\bf 13} (2012), no. 1, 1--45.
	
	\bibitem{olevskii3} A. Olevskii, A. Ulanovskii, {\em Fourier quasicrystals with unit masses.} C. R. Math. Acad. Sci. Paris {\bf 358} (2020), no.11-12, 1207--1211.
	
	
    

   

\end{thebibliography}
\end{document}